 \newtheorem{thm}{Theorem}[section]
 \newtheorem{prop}[thm]{Proposition}
 \theoremstyle{definition}
 \theoremstyle{remark}
 \numberwithin{equation}{section}
\newcommand{\lz}[1]{\langle #1 \rangle}
\newcommand{\rk}[1]{\text{rk}\left(#1\right)}
\newcommand{\Z}{\mathbb{Z}}
\newcommand{\R}{\mathbb{R}}
\newcommand{\C}{\mathbb{C}}
\newcommand{\Q}{\mathbb{Q}}
\newcommand{\Ha}{\mathbb{H}}
\newcommand{\RP}{\mathbb{R}P}
\newcommand{\CP}{\C P}
\newcommand{\del}{\partial}
\begin{document}

\title[Reflection groups of the quadratic form $-px_0^2+x_1^2+\ldots+x_n^2$]
 {Reflection groups of the quadratic form \\$-px_0^2+x_1^2+\ldots+x_n^2$ with $p$ prime.}

\author[Mark]{Alice Mark}

\begin{abstract}
We present the classification of reflective quadratic forms $-px_0^2+x_1^2+\ldots+x_n^2$ for $p$ prime.  We show that for $p = 5$, it is reflective for $2\leq n\leq 8$, for $p = 7\text{ and }17$ it is reflective for $n = 2\text{ and }3$, for $p=11$ it is reflective for $p=2,3,\text{ and } 4$, and it is not reflective for higher values of $n$.  We also show that it is non-reflective for $n > 2$ when $p = 13,19,\text{and }23$.  This completes the classification of these forms with $p$ prime. 
\end{abstract}

\maketitle

\section{Introduction}

Arithmetic hyperbolic reflection groups are contained in the groups of units of certain integral quadratic forms, and we study them by studying those forms.  A quadratic form is said reflective if its units group is generated up to finite index by finitely many reflections.  There are only finitely many reflective arithmetic quadratic forms.  This was proved separately and by different methods in two papers from around the same time by Nikluin \cite{nik2} and by Agol, Belolipetsky, Storm, and Whyte \cite{agol2006finiteness}.  In \cite{nik}, Nikulin lists all the strongly square-free lattices of rank 3 over $\Z$.   In \cite{allcock} Allcock  extended that list to include all of the rank 3 lattices over $\Z$, including the non-strongly-square-free ones.  One of the tools used by both Nikulin and Allcock in their classifications is Vinberg's algorithm, which is a method for producing the simple roots of a lattice given the associated quadratic form.  Vinberg \cite{vin} and much later McLeod \cite{mc} studied quadratic forms
\begin{equation}
\label{form}
f(x)=-px_0^2+x_1^2+\ldots+x_n^2
\end{equation}
with $p = 1,2,3$.  In the present paper we consider these same forms for other prime values of $p$.  These same results and other related ones can be found in McLeod's Ph.D. thesis from 2013 \cite{mcleodthesis}.   The results we present here are a subset of McLeod's, though our argument for why the first non-reflective case is non-reflective is slightly different from his.

 We restrict to $p$ prime because it simplifies our implementation of Vinberg's algorithm.  We wish to know for which values of $p$ prime and $n\geq 2$ this form is reflective.  Nikulin's list \cite{nik} contains no lattices of prime determinant larger than 23.  All reflective lattices with form \eqref{form}, $n=2$, and square-free determinant appear on his list.   We show that when $p = 5$, \eqref{form} is reflective for $2\leq n\leq 8$, when $p = 7\text{ and }17$ it is reflective for $n = 2\text{ and }3$, and when $p = 11$ it is reflective for $n=2,3,\text{ and } 4$.  We show that it is non-reflective in all higher dimensions than these.

The $p = 17$ and $p=23$ cases have an interesting feature.  For all other values of $p$, the first non-reflective case fails to be reflective because the fundamental polyhedron contains a facet that passes out of hyperbolic space at a point that cannot be a vertex.  This is not what happens when $p = 17$ and when $p=23$.  In both of these cases, all the vertices of all the facets are inside hyperbolic space.  We show that there are infinitely many facets by exhibiting an infinite order symmetry of the polyhedron.  Indeed, in both cases the symmetry group of the fundamental polyhedron is infinite dihedral.  We will give some details about this polyhedron when $p=23$ and $n=3$ because it has an interesting shape, and being $3$-dimensional it is possible to visualize.

We implemented our version of Vinberg's algorithm in C++ with the PARI Library.  We verified the computations by hand for $p=5,7,\text{ and }11$.  For higher values of $p$ this was impracitical.

In what follows, let $V=V^{n+1}$ be an $(n+1)$-dimensional real vector space with basis $v_0,\ldots,v_n$, and quadratic form \eqref{form}.  Let $L = L^{n+1}$ be the integer lattice generated by the same basis.  The group $\Theta$ of integral automorphisms is the group of symmetries of $L$ preserving the form \eqref{form} and mapping each connected component of the set $\{x:f(x)<0\}$ to itself.  This group splits as a semidirect product 
$$\Theta=\Gamma\rtimes H$$
where $\Gamma$ is generated by reflections and $H$ is a group of symmetries of an associated polyhedron in hyperbolic $n$-space \cite{vin}.  We say $L$ is reflective if $H$ is a finite group. 

Vinberg gives an algorithm for finding the simple roots of such a lattice in \cite{vin}.  By \emph{root}, we mean a primitive vector $r=\sum_{i=0}^nk_iv_i\in L$ of positive norm such that reflection across the mirror $r^{\perp}$ preserves the lattice.  Roots are characterized by the crystallographic condition, which reduces to 
\begin{equation}
\label{crystal}
\frac{2k_i}{(r,r)}\in\Z\text{ for } i>0 \text{ and } \frac{2pk_0}{(r,r)}\in\Z
\end{equation}
The inner product here is given by the quadratic form \eqref{form}. This leads to a nice simplification in our case, since having prime determinant means there are fewer possible norms for roots.  A system of simple roots is a subset of all the roots that spans the whole space, and with the property that any root is a non-negative linear combination of the simple roots. The inner product between any two simple roots is never positive.  The mirrors of the simple roots point outward from the walls of one copy of the fundamental polyhedron for the reflection group acting on hyperbolic space.

Vinberg's algorithm performs a batch search in order of increasing height, where height is given by the formula 
\begin{equation}
\frac{k_0^2}{(r,r)}
\end{equation}
and is understood to be the distance of a potential new root from the control vector $v_0$.  We extend a system of simple roots $e_1,\ldots e_n$ for the stabilizer of $v_0$, where $e_i = v_{i+1}-v_i$ for $1\leq i < n$ and $e_n = -v_n$.

\section{The reflective lattices}

 We will treat the $p=5$ case in detail to show how the computation works.  For the other cases we include Coxeter diagrams and tables of roots in the appendix.

When $p=5$, \eqref{crystal} implies that if $r$ is a root, $(r,r) = 1,2,5,$ or $10$, and if $(r,r)=5$ or $10$ then $5\nmid k_0$ and $5|k_j$ for $j\neq 0$.

\begin{table}
\begin{tabular}{clccc}
\hline
$\frac{k_0^2}{(e_i,e_i)}$&$e_i$&$(e_i,e_i)$&$i$&$n$\\
\hline\hline
$\frac{1}{2}$&$v_0+2v_1+v_2+v_3+v_4$&$2$&$n+3$&$\geq 4$\\
&$v_0+v_1+v_2+v_3+v_4+v_5+v_6+v_7$&$2$&$n+4$&$\geq 7$\\\\
$\frac{4}{5}$&$2v_0+5v_1$&$5$&$n+1$&$\geq 2$\\\\
$\frac{1}{1}$&$v_0+2v_1+v_2+v_3$&$1$&$n+3$&$3$\\
&$v_0+v_1+v_2+v_3+v_4+v_5+v_6$&$1$&$n+4$&$6$\\\\
$\frac{9}{5}$&$3v_0+5v_1+5v_2$&$5$&$n+2$&$\geq 2$\\
\hline
\end{tabular}

\caption{Vectors found with Vinberg's Algorithm when $p = 5$.  The labels $i$ are chosen for convenience to later arguments rather than the order in which the algorithm finds them.}
\label{table1}
\end{table}

\begin{prop} The first several vectors that Vinberg's algorithm produces are listed in Table \ref{table1}.
\end{prop}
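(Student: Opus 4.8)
The plan is to run Vinberg's algorithm by hand through height $9/5$, uniformly in $n$, and check that the roots it accepts are exactly those of Table \ref{table1}. Two observations make this a finite computation. First, $e_1,\ldots,e_n$ are the standard simple roots of the $B_n$ reflection group acting on $\langle v_1,\ldots,v_n\rangle$, and the cone they cut out is $\{x : x_1\ge x_2\ge\cdots\ge x_n\ge 0\}$; hence any root $r=\sum_i k_i v_i$ that the algorithm can accept after initialization has $k_0\ge 1$ and $k_1\ge k_2\ge\cdots\ge k_n\ge 0$, and for such $r$ the inequalities $(r,e_i)\le 0$ hold automatically. Second, the search proceeds in order of increasing height $k_0^2/(r,r)$, which for a root of fixed norm is determined by $k_0$ alone.

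Next I would enumerate every candidate root of height at most $9/5$. By the observation preceding the statement, $(r,r)\in\{1,2,5,10\}$; reducing $-5k_0^2+\sum_{j>0}k_j^2=(r,r)$ modulo $5$ rules out norm $10$ outright (it would force $k_0^2\equiv 3\pmod 5$) and shows norm $5$ requires $k_0\equiv\pm 2\pmod 5$ on top of $5\mid k_j$ for $j>0$. The only (norm, $k_0$) pairs of height $\le 9/5$ are therefore norm $2$ with $k_0=1$ (height $\tfrac12$), norm $5$ with $k_0=2$ (height $\tfrac45$), norm $1$ with $k_0=1$ (height $1$), and norm $5$ with $k_0=3$ (height $\tfrac95$). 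Solving $\sum_{j>0}k_j^2=(r,r)+5k_0^2$ over weakly decreasing nonnegative tuples, imposing $5\mid k_j$ in the two norm-$5$ cases, yields the short lists: at height $\tfrac12$, the roots $v_0+2v_1+v_2+v_3+v_4$ (needs $n\ge 4$) and $v_0+v_1+\cdots+v_7$ (needs $n\ge 7$); at height $\tfrac45$, only $2v_0+5v_1$; at height $1$, the roots $v_0+2v_1+v_2+v_3$ (needs $n\ge 3$) and $v_0+v_1+\cdots+v_6$ (needs $n\ge 6$); at height $\tfrac95$, only $3v_0+5v_1+5v_2$. The absence of further candidates for small $n$ uses only that $7$ is not a sum of three squares and $6$ is not a sum of two squares.

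It then remains to apply the acceptance test — is $(r,e)\le 0$ for every root $e$ already produced, including those of the same height? — to each candidate. Nearly all of these inner products are visibly negative; the ones that matter are $(2v_0+5v_1,\,3v_0+5v_1+5v_2)=-5$, $\ (v_0+2v_1+v_2+v_3+v_4,\,v_0+v_1+\cdots+v_7)=0$, $\ (v_0+2v_1+v_2+v_3,\,v_0+2v_1+v_2+v_3+v_4)=1$, and $(v_0+v_1+\cdots+v_6,\,v_0+v_1+\cdots+v_7)=1$. The first two show the norm-$5$ roots and the two norm-$2$ roots are accepted whenever they occur. The last two carry the argument: $v_0+2v_1+v_2+v_3$ is obtuse with $v_0+2v_1+v_2+v_3+v_4$, so it is rejected exactly when that norm-$2$ root has already been found, i.e.\ for $n\ge 4$, and survives only for $n=3$; likewise $v_0+v_1+\cdots+v_6$ is rejected for $n\ge 7$, while for $n=6$ it survives (one checks $(v_0+v_1+\cdots+v_6,\,e)\le 0$ against $2v_0+5v_1$ and against $v_0+2v_1+v_2+v_3+v_4$). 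Collecting the survivors in the ranges $n=2$, $n=3$, $n\in\{4,5\}$, $n=6$, $n\ge 7$ reproduces Table \ref{table1} with the stated labels.

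The proof is entirely bookkeeping, and the only point requiring care is organizational: the accepted system genuinely changes shape across those five ranges of $n$, and one must confirm that the order in which equal-height candidates are processed is irrelevant here — which holds because any two candidates of the same height turn out to be pairwise non-obtuse (as the inner products above show), so whether a candidate is kept depends only on the roots of strictly or equally lower height, independent of processing order. The underlying arithmetic — a couple of sum-of-squares facts and about a dozen inner-product evaluations — is routine, which is exactly why the $p=5$ case can be, and is, verified by hand.
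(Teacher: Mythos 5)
Your proof is correct and follows essentially the same route as the paper's: run Vinberg's algorithm batch by batch through height $\tfrac{9}{5}$, solve the resulting sum-of-squares equations under the weakly-decreasing-nonnegative coefficient condition, and reject candidates obtuse to previously accepted roots. Your only departures are minor streamlinings — a single mod-$5$ congruence disposing of all norm-$10$ batches at once (where the paper checks $\tfrac{1}{10}$, $\tfrac{4}{10}$, $\tfrac{9}{10}$, $\tfrac{16}{10}$ individually) and the explicit remark that same-height candidates are pairwise non-obtuse, so processing order is immaterial.
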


\begin{proof}The batches labeled $\frac{1}{10}$,$\frac{1}{5}$, and $\frac{4}{10}$ are empty because $(e,e)=5$ or $10$, and there is no way to write $(e,e)+5k_0^2=15$, $20$ or $30$ as a sum of squares of integers all divisible by $5$.

The batch labeled $\frac{1}{2}$ consists of vectors $e=\sum_{i=0}^nk_nv_n$ where 
$$\sum_{i=1}^nk_i^2=(e,e)+5k_0^2=7$$
There are two ways to write $7$ as a sum of squares.  This batch contains one vector if $n\geq 4$, and two if $n\geq 7$.  These two vectors have inner product $0$, so they correspond to orthogonal walls of the fundamental chamber.

The batch labeled $\frac{4}{5}$ consists of vectors $e=\sum_{i=0}^nk_iv_i$ where
$$\sum_{i=1}^nk_i^2=(e,e)+5k_0^2=25$$
and $5|k_i$ for all $i>0$.  The vector $2v_0+5v_1$ has inner product $0$ and $-5$ with the vectors in the previous nonempty batch so we keep it for all $n\geq 2$.

The batch labeled $\frac{9}{10}$ is empty since $(e,e)=10$ and $(e,e)+5k_0^2=55$ cannot be written as a sum of squares of integers all divisible by $5$.

The batch labeled $\frac{1}{1}$ consists of vectors $e=\sum_{i=0}^nk_iv_i$ where
$$\sum_{i=1}^nk_i^2=(e,e)+5k_0^2=6$$
There are two ways to write $6$ as a sum of squares.  One of these produces the vector $v_0+2v_1+v_2+v_3$.  This has inner product $0$ with the vector in batch $\frac{4}{5}$, so we keep it when $n=3$, but it has positive inner product with the vector $v_0+2v_1+v_2+v_3+v_4$ in batch $\frac{1}{2}$, so we throw it out when $n\geq 4$.

The other way to write $6$ as a sum of squares produces the vector $v_0+v_2+v_3+v_4+v_5+v_6$.  This has inner product $0$ with the vector $v_0+2v_1+v_2+v_3+v_4$ in batch $\frac{1}{2}$ and inner product $-5$ with the vector in batch $\frac{4}{5}$, so we keep it when $n=6$.  It has positive inner product with the vector $v_0+v_1+v_2+v_3+v_4+v_5+v_6+v_7$ in batch $\frac{1}{2}$, so we throw it out when $n\geq 7$.

The batch labeled $\frac{16}{10}$ is empty since $(e,e)=10$ and $(e,e)+5k_0^2=90$ cannot be written as a sum of squares of integers all divisible by $5$.

The batch labeled $\frac{9}{5}$ consists of vectors $e=\sum_{i=0}^nk_iv_i$ where
$$\sum_{i=1}^nk_i^2=(e,e)+5k_0^2=50$$
and $5|k_i$ for all $i>0$.  There is exactly one vector satisfying this.  $3v_0+5v_1+5v_2$ has non-positive inner product with all the vectors in previous batches, so we keep it for all $n\geq 2$.

\end{proof}

\begin{figure}[ht]
\begin{center}
\begin{tikzpicture}[scale=1]
\node[draw=none, fill=none] at (0,-3){};

\node[draw=none, fill=none] at (-3,.5) {$n=2$};
\node[draw, align = center] at (-1.5,-2) {$\widetilde{A}_1$};     
\filldraw (-2,1) circle (2pt) node[anchor = south east]{3};
\draw[thick,line width = 3] (-2,1) -- (-1,1);
\filldraw (-1,1) circle (2pt) node[anchor = south west]{4};
\draw[dashed, - ] (-1,1) -- (-1,0);
\filldraw (-1,0) circle (2pt) node[anchor = north west]{1};
\draw[thick,-] (-1,.05) -- (-2,.05);
\draw[thick,-] (-1,-.05) -- (-2,-.05);
\filldraw (-2,0) circle (2pt) node[anchor = north east]{2};
\draw[dashed,-] (-2,0) -- (-2,1);
    
\node[draw=none, fill=none] at (1,.5) {$n=3$};
\node[draw, align = center] at (2.5,-2) {$\widetilde{A}_1^2$};
\filldraw (2,1) circle (2pt) node[anchor = south east]{4};
\draw[thick,line width = 3] (2,1) -- (3,1);
\draw[dashed,-] (2,1) -- (2,0);
\filldraw (3,1) circle (2pt) node[anchor = south west]{5};
\draw[dashed,-] (3,1) -- (3,0);
\filldraw (2,0) circle (2pt) node[anchor = east]{1};
\draw[thick,-] (2,0) -- (3,0);
\draw[thick,-] (1.95,0) -- (1.95,-1);
\draw[thick,-] (2.05,0) -- (2.05,-1);
\filldraw (3,0) circle (2pt) node[anchor = west]{2};
\draw[thick,-] (2.95,0) -- (2.95,-1);
\draw[thick,-] (3.05,0) -- (3.05,-1);
\filldraw (2,-1) circle (2pt) node[anchor = north east]{6};
\draw[thick,line width = 3] (2,-1) -- (3,-1);
\filldraw (3,-1) circle (2pt) node[anchor = north west]{3};

\end{tikzpicture}

\begin{tikzpicture}[scale=.9]
\node[draw=none, fill=none] at (0,-4){};

\node[draw=none, fill=none] at (-2.5,0) {$n=4$};
\node[draw, align = center] at (0,-3) {$\widetilde{A}_1\widetilde{C}_2$};    
\filldraw (0,1) circle (2pt) node[anchor = south]{4};
\draw[thick,-] (.03,1.03) -- (.981,.339);
\draw[thick,-] (-.03,.97) -- (.921,.279);
\draw[thick,-] (-.03,1.03) -- (-.981,.339);
\draw[thick,-] (.03,.97) -- (-.921,.279);
\filldraw (.951,.309) circle (2pt) node[anchor = south west]{3};
\draw[thick,-] (.951,.309) -- (.588,-.809);
\filldraw (-.951,.309) circle (2pt) node[anchor = south east]{7};
\draw[thick,-] (-.951,.309) -- (-.588,-.809);
\filldraw (.588,-.809) circle (2pt) node[anchor = north west]{2};
\draw[thick,-] (.588,-.809) -- (-.588,-.809);
\draw[dashed,-] (.588,-.809) -- (.588,-1.809);
\filldraw (-.588,-.809) circle (2pt) node[anchor = north east]{1};
\draw[dashed,-] (-.588,-.809) -- (-.588,-1.809);
\filldraw (.588,-1.809) circle (2pt) node[anchor = north west]{6};
\draw[thick, line width = 3] (.588,-1.809) -- (-.588,-1.809);
\filldraw (-.588,-1.809) circle (2pt) node[anchor = north east]{5};    
    
\node[draw=none, fill=none] at (3,0) {$n=5$};
\node[draw, align = center] at (5,-3) {$\widetilde{A}_1\widetilde{B}_3$, $\widetilde{A}_4$};
\filldraw (5,2) circle (2pt) node[anchor = south]{5};
\draw[thick,-] (4.95,2) -- (4.95,1);
\draw[thick,-] (5.05,2) -- (5.05,1);
\filldraw (5,1) circle (2pt) node[anchor = south west]{4};
\draw[thick,-] (5,1) -- (5.951,.309);
\draw[thick,-] (5,1) -- (4.049,.309);
\filldraw (5.951,.309) circle (2pt) node[anchor = south west]{3};
\draw[thick,-] (5.951,.309) -- (5.588,-.809);
\filldraw (4.049,.309) circle (2pt) node[anchor = south east]{8};
\draw[thick,-] (4.049,.309) -- (4.412,-.809);
\filldraw (5.588,-.809) circle (2pt) node[anchor = north west]{2};
\draw[thick,-] (5.588,-.809) -- (4.412,-.809);
\draw[dashed,-] (5.588,-.809) -- (5.588,-1.809);
\filldraw (4.412,-.809) circle (2pt) node[anchor = north east]{1};
\draw[dashed,-] (4.412,-.809) -- (4.412,-1.809);
\filldraw (5.588,-1.809) circle (2pt) node[anchor = north west]{7};
\draw[thick, line width = 3] (5.588,-1.809) -- (4.412,-1.809);
\filldraw (4.412,-1.809) circle (2pt) node[anchor = north east]{6};    
    
\end{tikzpicture}

\begin{tikzpicture}[scale=.7]

\node[draw=none, fill=none] at (0,-3) {$n=6$};
\node[draw, align = center] at (0,-4.5) {$\widetilde{A}_1\widetilde{A}_4$, $\widetilde{A}_1\widetilde{B}_4$};
\filldraw (0,2) circle (2pt) node[anchor = south]{4};
\draw[thick,-] (0,2) -- (1.902,.618);
\draw[thick,-] (0,2) -- (-1.902,.618);
\filldraw (1.902,.618) circle (2pt) node[anchor = south west]{3};
\draw[thick,-] (1.902,.618) -- (1.176,-1.618);
\filldraw (-1.902,.618) circle (2pt) node[anchor = south east]{9};
\draw[thick,-] (-1.902,.618) -- (-1.176,-1.618);
\filldraw (1.176,-1.618) circle (2pt) node[anchor = north west]{2};
\draw[thick,-] (1.176,-1.618) -- (-1.176,-1.618);
\filldraw (-1.176,-1.618) circle (2pt) node[anchor = north east]{1};
\draw[thick,-] (0,2) -- (0,1.2);
\filldraw (0,1.2) circle (2pt) node[anchor = east]{5};
\draw[thick,-] (-.05,1.2) -- (-.05,.4);
\draw[thick,-] (.05,1.2) -- (.05,.4);
\filldraw (0,.4) circle (2pt) node[anchor = east]{6};
\draw[thick, line width = 3] (0,.4) -- (0,-.4);
\filldraw (0,-.4) circle (2pt) node[anchor = east]{10};
\draw[dashed,-] (0,-.4) -- (-.6,-1);
\draw[dashed,-] (0,-.4) -- (.6,-1);
\filldraw (-.6,-1) circle (2pt) node[anchor = east]{7};
\draw[thick, line width = 3] (-.6,-1) -- (.6,-1);
\draw[dashed,-] (-.6,-1) -- (-1.176,-1.618);
\filldraw (.6,-1) circle (2pt) node[anchor = west]{8};
\draw[dashed,-] (.6,-1) -- (1.176,-1.618);
    
\node[draw=none, fill=none] at (6,-3) {$n=7$};
\node[draw, align = center] at (6,-4.5) {$\widetilde{A}_1\widetilde{B}_5$, $\widetilde{A}_4\widetilde{C}_2$};
\filldraw (6,2) circle (2pt) node[anchor = south]{4};
\draw[thick,-] (6,2) -- (7.902,.618);
\draw[thick,-] (6,2) -- (4.098,.618);
\filldraw (7.902,.618) circle (2pt) node[anchor = south west]{3};
\draw[thick,-] (7.902,.618) -- (7.176,-1.618);
\filldraw (4.098,.618) circle (2pt) node[anchor = south east]{10};
\draw[thick,-] (4.098,.618) -- (4.824,-1.618);
\filldraw (7.176,-1.618) circle (2pt) node[anchor = north west]{2};
\draw[thick,-] (7.176,-1.618) -- (4.824,-1.618);
\filldraw (4.824,-1.618) circle (2pt) node[anchor = north east]{1};
\draw[thick,-] (6,2) -- (6,1.4);
\filldraw (6,1.4) circle (2pt) node[anchor = east]{5};
\draw[thick,-] (6,1.4) -- (6,.8);
\filldraw (6,.8) circle (2pt) node[anchor = east]{6};
\draw[thick,-] (5.95,.8) -- (5.95,.2);
\draw[thick,-] (6.05,.8) -- (6.05,.2);
\filldraw (6,.2) circle (2pt) node[anchor = east]{7};
\draw[thick,-] (5.95,.2) -- (5.95,-.4);
\draw[thick,-] (6.05,.2) -- (6.05,-.4);
\filldraw (6,-.4) circle (2pt) node[anchor = east]{11};
\draw[dashed,-] (6,-.4) -- (5.4,-1);
\draw[dashed,-] (6,-.4) -- (6.6,-1);
\filldraw (5.4,-1) circle (2pt) node[anchor = east]{8};
\draw[thick, line width = 3] (5.4,-1) -- (6.6,-1);
\draw[dashed,-] (5.4,-1) -- (4.824,-1.618);
\filldraw (6.6,-1) circle (2pt) node[anchor = west]{9};
\draw[dashed,-] (6.6,-1) -- (7.176,-1.618);

\node[draw=none, fill=none] at (12,-3) {$n=8$};
\node[draw, align = center] at (12,-4.5) {$\widetilde{A}_1\widetilde{B}_6$, $\widetilde{A}_4\widetilde{B}_6$};
\filldraw (12,2) circle (2pt) node[anchor = south]{4};
\draw[thick,-] (12,2) -- (13.902,.618);
\draw[thick,-] (12,2) -- (10.098,.618);
\filldraw (13.902,.618) circle (2pt) node[anchor = south west]{3};
\draw[thick,-] (13.902,.618) -- (13.176,-1.618);
\filldraw (10.098,.618) circle (2pt) node[anchor = south east]{11};
\draw[thick,-] (10.098,.618) -- (10.824,-1.618);
\filldraw (13.176,-1.618) circle (2pt) node[anchor = north west]{2};
\draw[thick,-] (13.176,-1.618) -- (10.824,-1.618);
\filldraw (10.824,-1.618) circle (2pt) node[anchor = north east]{1};
\draw[thick,-] (12,2) -- (12,1.4);
\filldraw (12,1.4) circle (2pt) node[anchor = east]{5};
\draw[thick,-] (12,1.4) -- (12,.8);
\filldraw (12,.8) circle (2pt) node[anchor = east]{6};
\draw[thick,-] (12,.8) -- (12,.2);
\filldraw (12,.2) circle (2pt) node[anchor = east]{7};
\draw[thick,-] (12,.2) -- (12,-.4);
\draw[thick,-] (12.06,.23) -- (12.63,.03);
\draw[thick,-] (11.97,.17) -- (12.57,-.03);
\filldraw (12.6,0) circle (2pt) node[anchor = west]{8};
\filldraw (12,-.4) circle (2pt) node[anchor = east]{12};
\draw[dashed,-] (12,-.4) -- (11.4,-1);
\draw[dashed,-] (12,-.4) -- (12.6,-1);
\filldraw (11.4,-1) circle (2pt) node[anchor = east]{9};
\draw[thick, line width = 3] (11.4,-1) -- (12.6,-1);
\draw[dashed,-] (11.4,-1) -- (10.824,-1.618);
\filldraw (12.6,-1) circle (2pt) node[anchor = west]{10};
\draw[dashed,-] (12.6,-1) -- (13.176,-1.618);
    
\end{tikzpicture}
\end{center}

\caption{Coxeter diagrams for the hyperbolic reflection groups associated to the form $-5x_0^2+x_1^2+\ldots+x_n^2$, and their maximal affine subdiagram types.}
\label{fig1}
\end{figure}

\begin{prop}
The diagrams in figure \ref{fig1} all describe acute angled polyhdra of finite volume.
\end{prop}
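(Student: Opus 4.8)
The plan is to verify the two assertions separately: acute-angledness, which is essentially immediate, and finite volume, via Vinberg's criterion applied diagram by diagram.

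\textbf{Acute-angledness.} By construction, Vinberg's algorithm outputs simple roots $e_1,\dots,e_N$ (the facet normals of the polyhedron $P_n$) with $(e_i,e_j)\le 0$ for $i\ne j$. Hence whenever $e_i^\perp$ and $e_j^\perp$ meet inside $\mathbb{H}^n$ the dihedral angle is $\arccos\bigl(-(e_i,e_j)/\sqrt{(e_i,e_i)(e_j,e_j)}\bigr)\le \pi/2$, so $P_n$ is acute-angled. Moreover the crystallographic condition \eqref{crystal} forces $-(e_i,e_j)/\sqrt{(e_i,e_i)(e_j,e_j)}\in\{0,\tfrac12,\tfrac{\sqrt2}{2},\tfrac{\sqrt3}{2}\}$ whenever the walls meet, so each dihedral angle is exactly $\pi/2,\pi/3,\pi/4$ or $\pi/6$, and $P_n$ is a genuine Coxeter polyhedron encoded by the diagram in Figure~\ref{fig1}. (Here $N=n,\,n+1,\dots,n+4$ is the number of facets read off from the preceding Proposition and Table~\ref{table1}.)

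\textbf{Finite volume.} I would invoke Vinberg's criterion \cite{vin}: an acute-angled Coxeter polyhedron with finitely many facets and diagram $\Sigma$ in $\mathbb{H}^n$ has finite volume if and only if every elliptic subdiagram of $\Sigma$ of rank $n-1$ is contained in exactly two subdiagrams, each of which is either elliptic of rank $n$ (a finite vertex) or parabolic of rank $n-1$ (an ideal vertex on $\partial\mathbb{H}^n$). Since $\Sigma$ has finitely many nodes, the only content is this local condition at the $(n-1)$-faces; note also that this is precisely the stopping condition for Vinberg's algorithm, so establishing the Proposition simultaneously confirms that the roots in Table~\ref{table1} and Figure~\ref{fig1} are a complete system of simple roots.

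\textbf{The check.} For each $n$ with $2\le n\le 8$ I would: (i) enumerate all elliptic subdiagrams of rank $n$ (the finite vertices); (ii) enumerate all parabolic subdiagrams of rank $n-1$ — their connected components are affine Coxeter diagrams with total rank $n-1$, and the maximal types are exactly those listed in the second line of each diagram in Figure~\ref{fig1} ($\widetilde A_1$ for $n=2$, $\widetilde A_1^2$ for $n=3$, $\widetilde A_1\widetilde C_2$ for $n=4$, and so on); and (iii) confirm that removing one node from any rank-$n$ elliptic subdiagram, or one node from a component of any rank-$(n-1)$ parabolic subdiagram, produces a rank-$(n-1)$ elliptic subdiagram lying in exactly one further vertex of type (i) or (ii). Because the diagrams grow uniformly (a fixed tail attached to a linear chain whose length increases with $n$), the bookkeeping can be organized partly by induction on $n$, handling the stable portion once and the finitely many new configurations at each step. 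The main obstacle is purely the exhaustiveness of step (iii): one must be certain that no rank-$(n-1)$ elliptic subdiagram is overlooked and that none sits inside three or more maximal elliptic/parabolic subdiagrams (which would force $P_n$ to be non-simple along that edge and potentially of infinite volume). This is mechanical but delicate for $n=7,8$, where $\Sigma$ has $11$ and $12$ nodes respectively; for $p=5$ the diagrams are small enough to be checked by hand using their evident symmetry, and for larger primes the identical verification is carried out by the C++ implementation.
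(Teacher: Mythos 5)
Your proof is correct in outline, but it runs on a different engine than the paper's. For finite volume you invoke the combinatorial reformulation of Vinberg's criterion --- every elliptic subdiagram of rank $n-1$ must extend in exactly two ways to an elliptic subdiagram of rank $n$ or a parabolic subdiagram of rank $n-1$ --- and then reduce everything to a (large but mechanical) enumeration of such subdiagrams. The paper instead applies the Gram-matrix form of the criterion directly to the critical principal submatrices: condition (a), that every positive semi-definite critical subdiagram extends to one of rank $n-1$ (this is exactly why the maximal affine types are listed under each diagram in Figure \ref{fig1}), and condition (b), that $K_S=\{0\}$ for each indefinite nondegenerate critical subdiagram, which here means the four dotted-line edges. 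Two of those dotted edges are dispatched by a combinatorial sufficient condition (a spherical subdiagram of rank $n-1$ disjoint from and unjoined to $S$, of type $D_5$, $E_6$, $E_7$), but for $S=\{1,n+1\}$ and $S=\{2,n+2\}$ that condition fails and the paper must compute explicitly with the root vectors, showing that any $v\in K$ orthogonal to both $e_1=-v_1+v_2$ and $e_{n+1}=2v_0+5v_1$ satisfies $a\geq 0$ and $a\leq 0$ for its $v_0$-coefficient $a$, hence is $0$. Your route buys freedom from the dotted edges and from any computation with the actual vectors, at the cost of enumerating \emph{all} elliptic subdiagrams of rank $n-1$ (for $n=8$ the diagram has $12$ nodes, so this is the heavier bookkeeping, and, as you acknowledge, the exhaustiveness of your step (iii) is the entire content); the paper keeps the combinatorics minimal and pays with the explicit $K_S=\{0\}$ calculations. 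Both are legitimate completions, and your observation that verifying the criterion simultaneously certifies the termination of Vinberg's algorithm applies equally to either formulation.
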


To prove this we use Vinberg's criteron for finite volume, which is Proposition 1 in \cite{vin}.   We restate the criterion here for convenience.
\begin{prop}
The necessary and sufficient condition for the polyhedron $P$ with gram matrix $S$ to have finite volume is that if $G_S$ is a critical principal submatrix of $G$, 
\begin{enumerate}
\item[(a)] if $G_S$ is positive semi-definite, then there is a superset $T$ of $S$ such that $G_T$ is positive semi-definite of rank $n-1$.
\item[(b)] if $G_S$ is indefinite and nondegenerate, then $K_S = \{0\}$.
\end{enumerate}
\end{prop}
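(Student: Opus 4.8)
Since this is Proposition~1 of \cite{vin}, in the present paper one simply invokes it; but here is how I would prove it. The plan is to translate the analytic condition $\mathrm{vol}(P)<\infty$ into combinatorics of the Gram matrix by working in the vector-space model. First I would pass to the polyhedral cone $C=\{x\in V:(x,e_i)\le 0\text{ for all }i\in I\}$ over $P$, so that $P$ is the image in the Klein model of $C\cap\{(x,x)\le 0\}$ and $\overline P$ is a convex polytope sitting inside the projective polytope $\widetilde P=\mathbb P(C)\subset\RP^n$. The key preliminary reduction is the classical fact that a convex hyperbolic polyhedron has finite volume if and only if it is the convex hull of finitely many points of $\overline{\Ha^n}$; equivalently, $\mathrm{vol}(P)<\infty$ iff $\widetilde P$ has finitely many walls, every vertex of $\widetilde P$ lying in $\overline P$ belongs to the closed ball $\overline{\Ha^n}$ (no vertex ``escapes past'' the sphere at infinity), and at each such vertex lying on $\partial\Ha^n$ the polyhedron is genuinely cusped. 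So the problem becomes: locate the vertices of $\widetilde P$ and classify each as ordinary (in $\Ha^n$), ideal (on $\partial\Ha^n$), or hyperideal (outside $\overline{\Ha^n}$).

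Next I would set up the dictionary between a subset $S\subseteq I$ and the signature of $G_S$: when $G_S$ is positive definite the flat $\bigcap_{i\in S}e_i^{\perp}$ is timelike and the face $F_S$, if nonempty, meets $\Ha^n$; when $G_S$ is positive semidefinite and degenerate the flat is tangent to the light cone and $F_S$ is an ideal point; and when $G_S$ is indefinite nondegenerate the flat is spacelike and $F_S$, if it is a genuine face of the cone, is hyperideal. The reason the ``critical'' submatrices suffice is the classification of indecomposable symmetric matrices all of whose proper principal submatrices are positive definite: such a $G_S$ is either \emph{parabolic}, i.e.\ an affine Coxeter diagram and hence of corank exactly one, or of \emph{Lann\'er type}, i.e.\ the Gram matrix of a compact hyperbolic simplex and hence indefinite and nondegenerate. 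Any ``bad'' vertex of $\widetilde P$ must carry a critical subconfiguration of one of these two kinds, so it is enough to rule out precisely the two failures that conditions (a) and (b) forbid.

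For condition (b) I would argue that if $G_S$ is of Lann\'er type but $K_S\ne\{0\}$, then the walls indexed by $S$ share a face point lying outside $\overline{\Ha^n}$, the cone over that hyperideal point is an unbounded subset of $P$, and a direct estimate gives $\mathrm{vol}(P)=\infty$; conversely, if every Lann\'er subdiagram has $K_S=\{0\}$, no vertex of $\widetilde P$ lying in $\overline P$ escapes the ball. For condition (a) I would pass to horospherical coordinates at the ideal point $z\in\partial\Ha^n$ forced by a parabolic $G_S$ and compute that a neighborhood of $z$ in $P$ has finite volume exactly when the link of $z$ is a compact Euclidean orbifold, which happens exactly when the full parabolic diagram $T\supseteq S$ of walls through $z$ satisfies $\rk{G_T}=n-1$ --- a genuine cusp of the right rank. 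Together with finiteness of the number of walls (only finitely many local configurations occur) this gives that $P$ satisfies (a) and (b) if and only if $\overline P$ is the convex hull of finitely many ordinary and ideal vertices, i.e.\ if and only if $\mathrm{vol}(P)<\infty$. The step I expect to be the main obstacle is the local volume analysis at an ideal point --- proving the \emph{equivalence} between non-compactness of the horospherical cross-section and divergence of the hyperbolic volume integral --- together with the projective-model bookkeeping needed to be sure that a nonzero $K_S$ genuinely produces a vertex of $P$ rather than a face of $\widetilde P$ that is cut away by other walls; for the full details I would refer to \cite{vin}.
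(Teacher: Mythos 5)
The paper contains no proof of this proposition to compare against: as the surrounding text states, it is quoted verbatim (typos included --- ``gram matrix $S$'' should read ``Gram matrix $G$'') from Proposition~1 of \cite{vin} and used as a black box. You correctly recognize this, and your sketch is a faithful outline of Vinberg's own argument rather than a different route: reduce finite volume to ``convex hull of finitely many ordinary and ideal vertices,'' set up the signature dictionary between principal submatrices $G_S$ and the causal type of the flat $\bigcap_{i\in S}e_i^{\perp}$, observe that every non--positive-definite principal submatrix contains a critical one, and then check that (a) forbids cusps with non-compact cross-section while (b) forbids hyperideal faces. The logic is sound and the steps you flag as requiring real work (the horospherical volume computation at an ideal vertex, and the fact that a minimal non-positive-definite submatrix is either parabolic of corank one or indefinite nondegenerate) are exactly where the content of Vinberg's proof lies. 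One terminological caution: not every critical indefinite nondegenerate submatrix is of Lann\'er type in the classical sense of a compact hyperbolic simplex diagram --- a single dotted edge (a pair of divergent hyperplanes) is already critical and indefinite, and in the present paper these dotted-line components are precisely the configurations to which condition (b) is applied. This does not affect your argument, since you only ever use ``indefinite and nondegenerate,'' but conflating the two classes would be an error if one tried to invoke the classification of Lann\'er diagrams. Also note that $K_S$ is by definition a subset of the polyhedral angle $K$ itself, so the worry that a nonzero $K_S$ might be ``cut away by other walls'' does not arise; a nonzero element of $K_S$ already satisfies all the inequalities defining $K$.
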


Here, $G_S$ is the principal submatrix of $G$ corresponding to some subset $S$ of the index set $I$ of the roots.  Notice that if $G_S$ is critical it must be either positive semi-definite or indefinite and nondegenerate.  The polyhedral angle $K = \{x\in V:(x,e_i)\leq 0 \text{ for all } i\}$ is the continuation of the polyhedron $P$.  $K_s= \{x\in K:(x,e_i)= 0 \text{ for all } i\in S\}$ is the subset of $K$ that is fixed by reflections in the roots whose indices are in $S$.

If conditions (a) and (b) of the finite volume criterion are met, then any facet of the polyhedral angle $K$ that passes out of the cone $\mathfrak{C} = \{x\in V:f(x)<0\}$ intersects the boundary of $\mathfrak{C}$ in a line corresponding to a vertex at infinity of $P$.

In the proof we refer also to the list of the affine Coxeter diagrams which can also be found in Table 2 of \cite{vin}.

\begin{proof}

The only cocompact hyperbolic diagrams occurring as subdiagrams of the graphs in figure \ref{fig1} are dotted line edges, and the only maximal affine subdiagrams have rank $n-1$. 

Listed below each diagram in Figure \ref{fig1}
the types of maximal affine subdiagrams that diagram has.  Every affine subdiagram that appears can be extended to one of these, which shows that condition (a) of the finite volume criterion is satisfied for all of the diagrams in Figure \ref{fig1}.

It remains to show that condition (b) of the finite volume criterion is met.  There are $4$ different dotted line components which appear.  We will show that each satisfies condition (b).

A sufficient condition for a dotted line subgraph to satisfy condition (b) is that there must exist a subset $T$ of the vertex set such that the diagram with vertex set $T$ is a spherical subdiagram of rank at least $n-1$, and there are no edges between the vertices in $S$ and the vertices in $T$ (this is a corollary of Proposition 2 in \cite{vin}).

The two dotted line subgraphs that appear only when $n=6,7,8$ can be shown to satisfy condition (b) using this fact.  These are the two dotted line subgraphs that include the vertex $n+4$, and by symmetry we need to only go through the argument for one of them.  Let $S=\{n+1,n+4\}$, and let $T=\{2,3,\ldots,n-1,n+3\}$.  When $n=6$, $T$ has type $D_5$.  When $n=7$, $T$ has type $E_6$.  When $n=8$, $T$ has type $E_7$.  In each case this is a spherical diagram of rank $n-1$ with no edges joining any vertex in $T$ to any vertex in $S$.

The remaining two dotted line subgraphs appear for all $n$, and checking them requires a more explicit calculation.  The first of these has vertex set $S=\{1,n+1\}$.  The two associated roots are
\[e_1 = -v_1+v_2 \text{ ~ }\text{ ~ }\text{ ~ and ~ }\text{ ~ }\text{ ~ } e_{n+1}=2v_0+5v_1\]
so a vector $v\in K_S$ (fixed by reflections with respect to both $e_1$ and $e_{n+1}$) has the form 
$$v=av_0+2a(v_1+v_2)+\sum_{i=3}^nk_iv_i$$
Since $K_S\subset K$, $v\in K_S$ must satisfy $(v,e_i)\leq 0 $ for all $e_i$.  In particular, this holds for $i= 1,\ldots,n$ so we have $2a\geq k_3\geq\ldots\geq k_n\geq 0$, so $a\geq 0$.  We also have that $(v,e_n+2) = -15a+20a\leq 0$, so $a\leq 0$.  We conclude that $a=0$, and therefore $K_S=\{0\}$.

By symmetry, we need not repeat the argument for  $S=\{2, n+2\}$.

\end{proof}

For $p = 7\text{ and }17$, \eqref{form} is reflective for $n = 2\text{ and }3$.  For $p = 11$ it is reflective for $n = 2,3,4$.  For $p = 13,19,\text{ and } 23$, \eqref{form} is reflective for $n=2$.  Tables of vectors found with Vinberg's algorithm are in the appendix.  For $p=7,11,\text{ and }17$, the Coxeter diagrams are also in the appendix.  Since the only reflective case for other values of $p$ is $n=2$,  we describe the fundamental polygon by its norm/angle sequence and its symmetries instead of by a Coxeter diagram.  A norm/angle sequence is a symbol that describes a 2-dimensional hyperbolic polygon.  It has the form 
$${a_1}_{n_1}{a_2}_{n_2}\ldots {a_k}_{n_k}$$
where $a_i$ is the norm of the outward pointing root orthogonal to the $i$-th side, and $\frac{\pi}{n_i}$ is the angle between the $i$-th side and the $(i+1)$-st side.  Cyclic permutations of such a symbol describe the same polygon if we take $a_{k+1} = a_1$.  If a polygon has a rotation symmetry, we write
$$\left({a_1}_{n_1}{a_2}_{n_2}\ldots {a_k}_{n_k}\right)^m$$
to indicate that the whole symbol can be obtained by concatenating $m$ copies of the partial symbol.  These polygon symmetries need not preserve the form \eqref{form}, but in all of our cases they do.

\section{Non-reflective lattices in higher dimensions}

The following proposition ensures that once we show that \eqref{form} is nonreflective for some $n$, we don't need to check any higher dimensions.

\begin{prop}
If \eqref{form} is non-reflective for some $n \geq 2$, then is also non-reflective in all higher dimensions.
\end{prop}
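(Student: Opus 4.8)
The plan is to prove the contrapositive: if $L^{n+2}$ is reflective then so is $L^{n+1}$; the proposition then follows by induction on $n$. I would start from the orthogonal splitting $L^{n+2} = L^{n+1} \perp \langle v_{n+1}\rangle$, in which $\langle v_{n+1}\rangle$ is unimodular of norm $1$. Hence $v_{n+1}$ — equivalently the simple root $e_{n+1} = -v_{n+1}$ produced at the start of Vinberg's algorithm for $L^{n+2}$ — is a root, the condition \eqref{crystal} being automatic for a norm-$1$ vector. So the mirror $v_{n+1}^\perp$ bounds the fundamental polyhedron $P$ of $\Gamma = \Gamma(L^{n+2})$, and I identify $v_{n+1}^\perp\cap \Ha^{n+1}$ with $\Ha^n = \Ha(L^{n+1})$, using that $v_{n+1}^\perp\cap L^{n+2} = L^{n+1}$ and that a vector of $L^{n+1}$ is a root of $L^{n+1}$ if and only if it is a root of $L^{n+2}$ (again by \eqref{crystal}, since $L^{n+1}$ is a direct summand).

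Now suppose $L^{n+2}$ is reflective, so $P$ has finite volume. Then the facet $F := P\cap v_{n+1}^\perp$ is an acute-angled polyhedron of finite $n$-volume in $\Ha^n$, whose walls are among the hyperplanes $v_{n+1}^\perp\cap e_i^\perp$ for walls $e_i^\perp$ of $P$ meeting $F$ in codimension $2$; this is the standard fact that a facet of a finite-volume Coxeter polyhedron is again one, in one lower dimension. Let $W_F$ be the group generated by the reflections of $\Ha^n$ in the walls of $F$; by Poincar\'e's polyhedron theorem (or the general theory of reflection groups) $F$ is a fundamental domain for $W_F$, so $W_F$ has finite covolume in $\Ha^n$. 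The goal is then to show $W_F\subseteq\Gamma(L^{n+1})$: granting this, $W_F\subseteq\Gamma(L^{n+1})\subseteq O(L^{n+1})$, and since $O(L^{n+1})$ is discrete and $W_F$ has finite covolume, $W_F$ — hence a fortiori $\Gamma(L^{n+1})$ — has finite index in $O(L^{n+1})$, i.e. $L^{n+1}$ is reflective.

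The heart of the matter, and the step I expect to require the most care, is showing that each generating reflection of $W_F$ already lies in $O(L^{n+1})$; this is exactly where primality is used. As in the case $p=5$, condition \eqref{crystal} together with primitivity forces every root $r$ of $L^{n+2}$ to satisfy $(r,r)\in\{1,2,p,2p\}$ (any prime $q\mid (r,r)$ with $q\neq 2,p$, or $4\mid(r,r)$, or $p^2\mid(r,r)$, would divide every coordinate of $r$). Let $e_i^\perp$ be a wall of $P$ meeting $F$, at dihedral angle $\pi/m$ with $v_{n+1}^\perp$, and put $c=(e_i,v_{n+1})\in\Z$, so $\cos^2(\pi/m)=c^2/(e_i,e_i)$. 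By Niven's theorem this rational is one of $0,\tfrac14,\tfrac12,\tfrac34$, i.e. $m\in\{2,3,4,6\}$; matching $(e_i,e_i)=4c^2$ ($m=3$), $(e_i,e_i)=2c^2$ ($m=4$), $(e_i,e_i)=\tfrac43 c^2$ ($m=6$) against $(e_i,e_i)\in\{1,2,p,2p\}$ with $p$ an odd prime leaves only $m=2$ (then $c=0$, so $e_i\in L^{n+1}$ is a root of $L^{n+1}$) and $m=4$ with $(e_i,e_i)=2$, $c=\pm1$ (then $\overline{e_i}:=e_i\mp v_{n+1}\in L^{n+1}$ has norm $1$, hence is a root of $L^{n+1}$). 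In either case the wall $v_{n+1}^\perp\cap e_i^\perp$ of $F$ coincides with $v_{n+1}^\perp\cap\overline{e_i}^\perp$, and the reflection of $\Ha^n$ across it is $r_{\overline{e_i}}\in\Gamma(L^{n+1})$. Thus $W_F\subseteq\Gamma(L^{n+1})$, which completes the argument.

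The remaining ingredients are routine and I would only cite them: the dimension bookkeeping in the splitting; the fact that a facet of a finite-volume polyhedron has finite volume in one lower dimension; Poincar\'e's theorem; discreteness of $O(L^{n+1})$; and the reduction of the stated proposition to the single step $n\to n+1$ by induction. The case analysis on dihedral angles is short precisely because $p$ is prime, so only four root norms occur — the same simplification the paper relies on throughout.
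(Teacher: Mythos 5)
Your proposal is correct and follows essentially the same route as the paper: both arguments restrict the fundamental polyhedron to the facet in the mirror of the norm-$1$ root $-v_{\text{last}}$, identify $v_{\text{last}}^{\perp}\cap L$ with the next-lower lattice, and use the fact that every wall meeting that facet does so at an angle $\pi/\text{even}$ (so each wall of the facet is cut out by a root of the sublattice), the only differences being that you state it contrapositively and spell out the norm classification $\{1,2,p,2p\}$ and the Niven-type angle analysis that the paper compresses into one sentence.
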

\begin{proof}
The proof is by induction.  Consider the root $e_{n+1} = -v_{n+1}$ in $L^{n+1}$.  Its mirror contains a face of the fundamental polyhedron $P$.  Because $e_{n+1}^2 = 1$, all the faces of $P$ incident with $e_{n+1}^{\perp}$ meet it in an angle of $\frac{\pi}{even}$.  Thus for each such face $F$, there is a root $r$ of $L^{n+1}$ that is orthogonal to $e_{n+1}$ whose mirror intersects $e_{n+1}^{\perp}$ in the same line as $F$.  The reflections in these roots preserve $e_{n+1}^{\perp}$ and $L^{n+1}$, so they are roots of the lattice $e_{n+1}^{\perp}\cap L^{n+1}$.

The lattice $e_{n+1}^{\perp}\cap L^{n+1}$ is just the complement to $v_{n+1}$, which is to say the lattice generated by $v_o\ldots,v_{n}$.  So it is exactly $L^n$, with quadratic form \eqref{form}.  By assumption this lattice is non-reflective, so it has infinitely many simple roots.  Thus $L^{n+1}$ also has infinitely many simple roots.  Thus by induction \eqref{form} is non-reflective in all higher dimensions.
\end{proof}

\begin{prop}
When $p = 5$ and $n\geq 9$, \eqref{form} is non-reflective.
\end{prop}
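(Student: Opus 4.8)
The plan is to reduce to the single case $n=9$ and then produce an explicit obstruction to finite volume. By the previous proposition, if the form $-5x_0^2+x_1^2+\ldots+x_9^2$ is non-reflective then so is \eqref{form} for every $n\geq 9$, so it suffices to analyze the lattice $L^{10}$.

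First I would run Vinberg's algorithm for $n=9$, continuing past the batches recorded in Table \ref{table1}. For $n=9$ all of those batches behave as in their ``$\geq$'' cases, so among the simple roots of $L^{10}$ are $e_1,\ldots,e_9$ (the roots $v_{i+1}-v_i$ and $-v_9$ spanning the stabilizer of $v_0$) together with $e_{10}=2v_0+5v_1$, $e_{11}=3v_0+5v_1+5v_2$, $e_{12}=v_0+2v_1+v_2+v_3+v_4$ and $e_{13}=v_0+v_1+\cdots+v_7$; these are forced to be simple roots of $L^{10}$. I would then record the Coxeter diagram of the roots produced and search it for a violation of Vinberg's finite-volume criterion.

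The heart of the argument is to exhibit a subset $S$ of the simple roots, of size $n-1=8$, whose principal Gram submatrix $G_S$ is critical, indefinite and nondegenerate, and for which $K_S\neq\{0\}$; equivalently, the edge $F=P\cap\bigcap_{i\in S}e_i^{\perp}$ of the fundamental polyhedron runs off to a point $v$ on $\del\mathbb{H}^9$ (an isotropic vector) of a type that, by part (b) of Vinberg's criterion, is incompatible with $P$ having finite volume --- a face that ``passes out of hyperbolic space at a point that cannot be a vertex of the fundamental domain.'' (Depending on which face the algorithm turns up, the obstruction might instead be a positive semi-definite critical $G_S$ of rank $<n-1$ that cannot be enlarged, i.e.\ a failure of part (a) of the criterion; the bookkeeping is the same.) Since the roots in $S$ come from the first few batches they are forced to be simple, so this edge $F$ and its ideal endpoint $v$ are present in the honest fundamental polyhedron, not merely in a partial one.

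The step I expect to be the main obstacle is showing that this failure is permanent --- that no further iteration of Vinberg's algorithm can repair it. Concretely one must check that $v$ genuinely lies in the closure of $P$, i.e.\ that no root of $L^{10}$ has mirror separating $v$ from $P$, and more generally that no additional simple root cuts the offending face off from infinity. Here the hypothesis $p=5$ is what makes this tractable: by \eqref{crystal} a root has norm $1,2,5,$ or $10$, and in the last two cases every coordinate $k_j$ with $j>0$ is divisible by $5$; feeding these constraints into the system $(r,e_i)\leq 0$ for $i\in S$ together with $(r,v)>0$ should leave no solutions, much as in the verification of condition (b) for the dotted-line subgraphs above. This is exactly the sort of computation that was carried out by hand for $p=5$. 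Once it is done, $P$ has infinite volume, $L^{10}$ is non-reflective, and the previous proposition closes out all $n\geq 9$.
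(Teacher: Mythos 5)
Your reduction to $n=9$ and your observation that the first batches of Vinberg's algorithm force a specific set of simple roots are both correct and match the paper. The gap is in the obstruction itself and in how you certify that it is permanent. What actually goes wrong for $n=9$ is not a violation of condition (b): the critical subdiagram the paper exhibits is a copy of $\widetilde{D}_7$ on the roots numbered $3,4,5,6,7,8,12,13$, whose Gram matrix is \emph{positive semi-definite} of rank $n-2=7$, so the relevant condition is (a) --- this affine diagram must extend to one of rank $n-1=8$, i.e.\ its isotropic vector $e=2v_0+3v_1+2v_2+v_3+\cdots+v_9$ must be an honest cusp of $P$. Your proposed permanence check --- verifying that no further root $r$ satisfies $(r,e_i)\le 0$ for $i\in S$ together with $(r,v)>0$ --- would only show that the ideal point stays in the closure of $P$; that is entirely consistent with finite volume if the point is a cusp, so by itself it proves nothing.

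The missing idea is the lattice-theoretic cusp criterion (Lemma 3.1 of \cite{vin3}): $e$ corresponds to a vertex of $P$ at infinity if and only if the positive definite lattice $\overline{M}=(e^{\perp}\cap L)/\lz{e}$ is spanned by roots. This is a statement about \emph{all} roots of $L$, not just the simple roots found so far, so it settles permanence in one stroke and requires no inequality chasing. The paper computes that $\overline{M}$ contains the $D_7$ spanned by $\overline{e}_3,\ldots,\overline{e}_8,\overline{e}_{12}$, that the orthogonal complement of this $D_7$ in $\overline{M}$ is generated by $f_1=v_0-5v_2$ of norm $20$ (not $1,2,5$, or $10$, hence not a root), and that the index-$4$ glue vector $f_2=v_2-2v_3$ has norm $5$ but violates the divisibility condition in \eqref{crystal}, hence is also not a root. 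So $\overline{M}$ is not spanned by roots, $e$ is not a cusp, and $P$ has infinite volume. Without this criterion, or an equivalent argument that the $\widetilde{D}_7$ cannot be completed to a rank-$8$ affine diagram by roots of $L$, your outline does not close.
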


\begin{figure}[t]

\begin{center}
\begin{tikzpicture}[scale=1]

  \node[draw=none, fill=none] at (0,-2.5) {$n = 9$};    
  \filldraw (0,2) circle (2pt) node[anchor = south]{4};
  \draw[thick,-] (0,2) -- (1.902,.618);
  \draw[thick,-] (0,2) -- (-1.902,.618);
  \filldraw (1.902,.618) circle (2pt) node[anchor = south west]{3};
  \draw[thick,-] (1.902,.618) -- (1.176,-1.618);
  \filldraw (-1.902,.618) circle (2pt) node[anchor = south east]{12};
  \draw[thick,-] (-1.902,.618) -- (-1.176,-1.618);
  \filldraw (1.176,-1.618) circle (2pt) node[anchor = north west]{2};
  \draw[thick,-] (1.176,-1.618) -- (-1.176,-1.618);
  \filldraw (-1.176,-1.618) circle (2pt) node[anchor = north east]{1};
  \draw[thick,-] (0,2) -- (0,1.4);
  \filldraw (0,1.4) circle (2pt) node[anchor = east]{5};
  \draw[thick,-] (0,1.4) -- (0,.8);
  \filldraw (0,.8) circle (2pt) node[anchor = east]{6};
  \draw[thick,-] (0,.8) -- (0,.2);
  \filldraw (0,.2) circle (2pt) node[anchor = east]{7};
  \draw[thick,-] (0,.2) -- (0,-.4);
  \draw[thick,-] (0,.2) -- (.4,0);
  \filldraw (0.4,0) circle (2pt) node[anchor = south west]{8};
  \draw[thick,-] (.43,.03) -- (.83,-.17);
  \draw[thick,-] (.37,-.03) -- (.77,-.23);
  \filldraw (.8,-.2) circle (2pt) node[anchor = south west]{9};
  \filldraw (0,-.4) circle (2pt) node[anchor = east]{13};
  \draw[dashed,-] (0,-.4) -- (-.6,-1);
  \draw[dashed,-] (0,-.4) -- (0.6,-1);
  \filldraw (-.6,-1) circle (2pt) node[anchor = east]{10};
  \draw[thick, line width = 3] (-.6,-1) -- (0.6,-1);
  \draw[dashed,-] (-.6,-1) -- (-1.176,-1.618);
  \filldraw (0.6,-1) circle (2pt) node[anchor = west]{11};
  \draw[dashed,-] (0.6,-1) -- (1.176,-1.618);
    
\end{tikzpicture}	
\end{center}
\caption{Coxeter diagram of the polyhedron in dimension $n = 9$ with the first four vectors found by Vinberg's algorithm}
\label{fig2}
\end{figure}

\begin{proof}
By the previous proposition, we only need to show that it is non-reflective when $n=9$.  Vinberg's algorithm finds the same first 4 vectors as when $n=8$.  The Coxeter diagram obtained by adding these four roots is shown in Figure \ref{fig2}.  

The diagram has a subdiagram of type $\widetilde{D}_7$ labeled in Figure \ref{fig2} by the numbers $3,4,5,6,7,8,12,13$.  This affine subdiagram corresponds to a primitive vector $e\in V$ of norm $0$, obtained by taking a certain positive linear combination of the corresponding roots: 
$$e = e_3+2e_4+2e_5+2e_6+2e_7+e_8+e_{12}+e_{13} = 2v_0+3v_1+2v_2+v_3+\ldots+v_9$$
The faces of $K = \{x\in V:(x,e_i)\leq 0 \text{ for all } i\}$ corresponding to roots making up this copy of $\widetilde{D}_7$ pass through $e$, so for $P$ to have finite volume, $e$ must correspond to a vertex of $P$ at infinity.

Let $M = e^{\perp}\cap L$, and $\overline{M} = M/\lz{e}$.  Lemma 3.1 in \cite{vin3} says that $e$ corresponds to a vertex of $P$ at infinity if and only if the positive definite lattice $\overline{M}$ is spanned by roots. $\overline{M}$ contains a primitive $D_7$ sublattice spanned by the $\overline{e}_3,\ldots,\overline{e}_8,\overline{e}_{12}$, where the bar denotes the image in the quotient by $\lz{e}$.  The orthogonal complement to this sublattice in $\overline{M}$ is generated by the image in the quotient of a primitive vector $f_1=v_0+5v_2 $ of norm $20$.  This vector is not a root since its norm is not $1,2,5$, or $10$.  The sum of the $D_7$ with its complement has index $4$ in $\overline{M}$ with cyclic quotient, so the glue vector $f_2 = v_2-2v_3$ is needed to generate all of $\overline {M}$.  The vector $f_2$ has norm $5$, but is also not a root since the coefficients of $v_i$ for $i>0$ are not all divisible by $5$.  Since  $\overline{M}$ is not generated by roots, $e$ does not correspond to a vertex of $P$ at $\infty$, and $P$ does not have finite volume.
\end{proof}

For $p = 7,11,13,\text{ and }19$, the proof is essentially the same.  We find a primitive vector of norm zero corresponding to a rank $n-2$ affine subdiagram, and show that its orthogonal sublattice is not generated by roots.  The tables listing all these vectors are in the appendix.  For $p = 17$, $n=4$ and $p = 23$, $n=3$, Vinberg's algorithm will never produce an affine subdiagram of rank $n-2$, so we need to do something slightly different.  For both of these, we show that the polyhedron is infinite by showing it has a symmetry of infinite order.

When $p=23$ and $n=3$, it is nice to try to visualize the polyhedron since it is 3-dimensional.  It is an infinite cylinder tiled by hexagonal faces.  Four faces fit around the cylinder.  Its symmetry group is generated by an order 2 rotation $R$ fixing the center of two opposite faces, and a glide reflection $G$ that anti-commutes with $R$.  Figure \ref{23poly} shows an unwrapped picture of a piece of its surface.

The first 14 vectors found by Vinberg's algorithm are listed in Table \ref{23table} and are labeled on the diagram in Figure \ref{23poly}.  The glide reflection $G$ takes the corner  $\{e_1,e_{10},e_7\}$ to the corner $\{e_8,e_6,e_4\}$.  The rotation $R$ takes the corner $\{e_1,e_{10},e_7\}$ to the corner $\{e_1,e_3,e_2\}$.  One can write down the matrices for $R$ and $G$, and check that they preserve the form \eqref{form}, and that they generate the infinite dihedral group
$$\mathcal{D} = \lz{G,R:R^2, GRGR}$$
By looking at the angles between the facets, one can see that $\mathcal{D}$ acts transitively on the various type of corners and that it is the full symmetry group of the polyhedron.

We can look for a generalized lattice Weyl vector by looking for eigenvectors of infinite order elements of $\mathcal{D}$ with eigenvalue $1$.  The index $4$ subgroup generated by $G^2$ fixes a vector of positive norm.  The root system has hyperbolic type (note that our sign convention is the opposite of Nikulin's \cite{nik}).

Similarly in the $p=17$ case we find that the symmetry group of the 4-dimensional hyperbolic polyhedron is an infinite dihedral group, and there is a generalized lattice Weyl vector of positive norm.  Thus we have the following:

\begin{prop}
The form \eqref{form} is nonreflective when $p = 23$ and $n = 3$, and when $p=17$ and $n=4$.
\end{prop}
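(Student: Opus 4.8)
The plan is to run Vinberg's algorithm for $f(x) = -23x_0^2 + x_1^2 + x_2^2 + x_3^2$ and to observe that, in contrast to every earlier case, the control polyhedron never closes up: the algorithm keeps producing simple roots indefinitely, and no affine subdiagram of rank $n - 1 = 2$, equivalently no ideal vertex of the fundamental polyhedron $P$ in hyperbolic $3$-space, is ever created, so the finite-volume criterion can never be met by any finite batch of roots. Concretely, I would first compute enough of the simple root system $e_1, e_2, \ldots$ (these are tabulated in the appendix) to recognize that its Gram matrix settles into a bi-infinite periodic band: the Coxeter diagram becomes an infinite hexagonal cylinder in which consecutive mirrors meet at $\frac{\pi}{2}$ or $\frac{\pi}{3}$, every face is a hexagon, and all vertices of $P$ lie in hyperbolic $3$-space rather than on its boundary. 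The restriction to $p$ prime is what makes this feasible: by \eqref{crystal} the possible root norms for $p = 23$ form a short list, which keeps the batch search, and the recognition of the eventual periodicity, under control.

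The heart of the argument is to exhibit an explicit integral automorphism $\gamma$ of $f$, that is, a $4 \times 4$ integer matrix preserving $\mathrm{diag}(-23, 1, 1, 1)$, with three properties: (i) it maps the chosen component of $\{x : f(x) < 0\}$ to itself, so that $\gamma \in \Theta$; (ii) it has infinite order, acting as a translation along the axis of the cylinder; and (iii) it stabilizes $P$, because it shifts the periodic pattern of simple roots by one full period and hence permutes the primitive simple roots $e_i$. The matrix $\gamma$ can be read off from the periodicity found in the first step: if the pattern has period $m$, the shift $e_i \mapsto e_{i + m}$ along the band is induced by an integral linear map of $V$, and one verifies by a finite computation that this map satisfies (i), (ii), and (iii). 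With $\gamma$ in hand the proposition follows at once: $\gamma$ lies in $H = \mathrm{Stab}_{\Theta}(P)$ and has infinite order, so $H$ is infinite, and therefore $L^4$ is by definition non-reflective. Equivalently, if $e$ is any single simple root, then $e, \gamma e, \gamma^2 e, \ldots$ are pairwise distinct simple roots, so $P$ has infinitely many faces; this is the reflection group of hyperbolic type mentioned in the introduction.

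The main obstacle is the passage from a finite computer run to a rigorous description of the entire simple root system, which property (iii) requires. One must prove that the periodic band genuinely continues forever: that from some point on, every vector produced by Vinberg's algorithm in a new batch either already lies in the $\gamma$-orbit of a previously accepted simple root, or has strictly positive inner product with one of them and is discarded. Establishing this amounts to showing that the $\gamma$-translates of the finitely many roots in one fundamental period exhaust the simple roots, which uses the bound on root norms from \eqref{crystal} together with the observation that no rank-$2$ affine subdiagram ever appears to rule out any extra face that would break the pattern. A secondary and purely mechanical task is to locate the correct $\gamma$ and check (i)--(iii), which is finite linear algebra once the period has been identified.
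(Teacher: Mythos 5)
Your core idea---exhibit an infinite-order integral isometry $\gamma$ that translates the hexagonal cylinder along its axis and conclude that $H$ is infinite---is exactly the paper's strategy, and your reduction of non-reflectivity to properties (i)--(iii) of such a $\gamma$ is sound. But your plan for verifying (iii) has a genuine gap, and it is the one you yourself flag as ``the main obstacle.'' You propose to show that $\gamma$ permutes the simple roots by proving that the periodic band continues forever and that the $\gamma$-translates of one period exhaust the simple root system; your sketch of how to do this leans on ``the observation that no rank-$2$ affine subdiagram ever appears,'' which is itself a statement about the entire infinite run of the algorithm and cannot be certified by any finite computation---as proposed, the argument is circular. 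A side remark: the inference that the absence of ideal vertices already blocks finite volume is wrong. Since $23\equiv 7\pmod 8$, the form is anisotropic over $\Q$, so the group is cocompact and a finite-volume fundamental domain here would be compact with only finite vertices; the absence of rank-$2$ affine subdiagrams is therefore expected and proves nothing by itself.

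The paper closes the gap using only finite data. It computes the first $14$ roots, locates two finite vertices $p$ and $q$ of $P$ (primitive lattice vectors of norm $-23$ orthogonal to two triples of walls), and defines $\gamma$ as the linear map sending the vertex frame $\{e_6,e_{13},e_9,p\}$ to the frame $\{e_{12},e_8,e_5,q\}$. One checks directly that $\gamma$ is integral, preserves the form, and preserves the cone component (it sends $p$ to $q$). Since $\gamma$ permutes the mirrors of $\Gamma$ and carries the corner of $P$ at $p$, walls and outward normals included, to the corner of $P$ at $q$, it maps interior points of $P$ near $p$ to interior points of $P$ near $q$; hence $\gamma(P)=P$ and $\gamma\in H$. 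Infinite order is then a check on a single explicit $4\times 4$ matrix. No description of the full simple root system is required. If you replace your periodicity argument with this vertex-frame construction, your proof goes through.
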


Even though it is difficult to visualize a 4-dimensional hyperbolic polyhedron in any sort of concrete way, we can get an idea of what the automorphism group is doing by looking at how it acts on the Coxeter diagram.  We included so many diagrams in the appendix because they are so useful for visualizing polyhedron symmetries.

\pagebreak
\appendix

\section{Tables and diagrams}
\subsection{Tables and diagrams for $p=7$}

\begin{table}[H]
\begin{tabular}{clccc}
\hline
$\frac{k_0^2}{(e_i,e_i)}$&$e_i$&$(e_i,e_i)$&$i$&$n$\\
\hline\hline
$\frac{1}{2}$&$v_0+3v_1$&$2$&$n+1$&$\geq 2$\\
&$v_0+2v_1+2v_2+v_3$&$2$&$n+2$&$\geq 3$\\\\
$\frac{1}{1}$&$v_0+2v_1+2v_2$&$1$&$n+2$&$2$\\
\hline
\end{tabular}
\caption{Vectors found with Vinberg's Algorithm for $p = 7$}
\label{table7}
\end{table}

\begin{figure}[H]
\begin{center}
\begin{tikzpicture}[scale=1]
  \node[draw=none, fill=none] at (-1.5,-1.5) {$n=2$};    
  \filldraw (-2,.2) circle (2pt) node[anchor = south east]{4};
  \draw[thick,-] (-2,.25) -- (-1,.25);
  \draw[thick,-] (-2,.15) -- (-1,.15);
  \filldraw (-1,.2) circle (2pt) node[anchor = south west]{3};
  \draw[dashed, - ] (-1,.2) -- (-1,-.8);
  \filldraw (-1,-.8) circle (2pt) node[anchor = north west]{1};
  \draw[thick,-] (-1,-.75) -- (-2,-.75);
  \draw[thick,-] (-1,-.85) -- (-2,-.85);
  \filldraw (-2,-.8) circle (2pt) node[anchor = north east]{2};
  \draw[dashed,-] (-2,-.8) -- (-2,.2);

  \node[draw=none, fill=none] at (2,-1.5) {$n=3$};    
  \filldraw (2,1) circle (2pt) node[anchor = south]{3};
  \draw[thick,-] (2.03,1.03) -- (2.981,.339);
  \draw[thick,-] (1.97,.97) -- (2.921,.279);
  \draw[thick,-] (1.97,1.03) -- (1.019,.339);
  \draw[thick,-] (2.03,.97) -- (1.079,.279);
  \filldraw (2.951,.309) circle (2pt) node[anchor = south west]{5};
  \draw[thick,-] (2.951,.309) -- (2.588,-.809);
  \draw[thick,-](2.951,.309) -- (1.049,.309);
  \filldraw (1.049,.309) circle (2pt) node[anchor = south east]{2};
  \draw[thick,-] (1.049,.309) -- (1.412,-.809);
  \filldraw (2.588,-.809) circle (2pt) node[anchor = north west]{4};
  \draw[dashed,-] (2.588,-.809) -- (1.412,-.809);
  \filldraw (1.412,-.809) circle (2pt) node[anchor = north east]{1};    
\end{tikzpicture}
\end{center}
\caption{$p=7$, reflective with $n = 2\text{ and }3$}
\end{figure}

\begin{figure}[H]
\begin{center}
\begin{tikzpicture}[scale=1]
  \filldraw (0,2) circle (2pt) node[anchor = south]{4};
  \draw[thick,-] (-.05,2) -- (-.05,1);
  \draw[thick,-] (.05,2) -- (.05,1);
  \filldraw (0,1) circle (2pt) node[anchor = south west]{3};
  \draw[thick,-] (0,1) -- (0.951,.309);
  \draw[thick,-] (0,1) -- (-.951,.309);
  \filldraw (0.951,.309) circle (2pt) node[anchor = south west]{6};
  \draw[thick,-] (0.951,.309) -- (0.588,-.809);
  \draw[thick,-] (.951,.309) -- (-.951,.309);
  \filldraw (-.951,.309) circle (2pt) node[anchor = south east]{2};
  \draw[thick,-] (-.951,.309) -- (-.588,-.809);
  \filldraw (.588,-.809) circle (2pt) node[anchor = north west]{5};
  \draw[dashed,-] (0.588,-.809) -- (-.588,-.809);
  \filldraw (-.588,-.809) circle (2pt) node[anchor = north east]{1};  
\end{tikzpicture}	
\end{center}
\caption{The first non-reflective case with $p = 7$ is $n = 4$}
\begin{tabular}{ll}
Affine subdiagram: & $\widetilde{A}_2$\\ 
Norm 0 vector: & $v_0+2v_1+v_2+v_3+v_4$\\
Complement: & $2v_0+7v_1$\\
Complement norm: & $21$\\
Order 3 glue: & $v_1-2v_2$\\
\end{tabular}
\end{figure}

\pagebreak

\subsection{Tables and diagrams for $p=11$}
\begin{table}[H]
\begin{tabular}{clccc}
\hline
$\frac{k_0^2}{(e_i,e_i)}$ & $e_i$ & $(e_i,e_i)$ & $i$ & $n$\\
\hline\hline
$\frac{9}{22}$ & $3v_0+11v_1$ & $22$ & $n+1$ & $\geq 2$\\\\
$\frac{1}{2}$ & $v_0+3v_1+2v_2$ & $2$ & $n+2$ & $\geq 2$\\
& $v_0+2v_1+2v_2+2v_3+v_4$ & $2$ & $n+3$ & $\geq 4$\\
& $3v_0+v_1+v_2+v_3+v_4+v_5$ & $2$ & $n+5$ & $\geq 5$\\\\
$\frac{1}{1}$ & $v_0+2v_1+2v_2+2v_3$ & $1$ & $n+3$ & $3$\\
& $v_0+3v_1+v_2+v_3+v_4$ & $1$ & $n+5$ & $\geq 4$\\\\
$\frac{64}{22}$ & $8v_0+22v_1+11v_2+11v_3$ & $22$ & $n+4$ & $\geq 3$\\ 
\hline
\end{tabular}
\caption{Vectors found with Vinberg's Algorithm for $p = 11$}
\label{table11}
\end{table}

\begin{figure}[H]
\begin{center}
\begin{tikzpicture}[scale=.7]
  \node[draw=none, fill=none] at (-2,-1.5) {$n=2$};
  \filldraw (-3,1) circle (2pt) node[anchor = south east]{1};
  \draw[thick,-] (-3.05,1) -- (-3.05,-1);
  \draw[thick,-] (-2.95,1) -- (-2.95,-1);
  \draw[dashed,-] (-3,1) -- (-1,1);
  \filldraw (-3,-1) circle (2pt) node[anchor = north east]{2};
  \draw[dashed,-] (-3,-1) -- (-1,-1);
  \filldraw (-1,-1) circle (2pt) node[anchor = north west]{3};
  \draw[thick,-] (-1,-1) -- (-3,1);
  \filldraw (-1,1) circle (2pt) node[anchor = south west]{4};
  
  \node[draw=none, fill=none] at (2,-1.5) {$n=3$};
  \filldraw (1,1) circle (2pt) node[anchor = south east]{4};
  \draw[thick,line width = 3] (1,1) -- (3,1);
  \draw[dashed,-] (1,1) -- (1,-1);
  \draw[dashed,-] (1,1) -- (2,.3);
  \filldraw (3,1) circle (2pt) node[anchor = south west]{7};
  \draw[dashed,-] (3,1) -- (2,.3);
  \draw[dashed,-] (3,1) -- (3,-1);
  \filldraw (2,.3) circle (2pt) node[anchor = south]{1};
  \draw[thick,-] (2,.3) -- (1.8,-.5);
  \draw[thick,-] (2,.3) -- (2.2,-.5);
  \filldraw (1.8,-.5) circle (2pt) node[anchor = south east]{5};
  \draw[thick, line width = 3] (1.8,-.5) -- (2.2,-.5);
  \draw[thick,-] (1.83,-.53) -- (1.03,-1.03);
  \draw[thick,-] (1.77,-.47) -- (.97,-.97);
  \filldraw (2.2,-.5) circle (2pt) node[anchor = south west]{2};
  \draw[thick,-] (2.23,-.47) -- (3.03,-.97);
  \draw[thick,-] (2.17,-.53) -- (2.97,-1.03);
  \filldraw (1,-1) circle (2pt) node[anchor = north east]{6};
  \draw[dashed,-] (1,-1) -- (3,-1);
  \filldraw (3,-1) circle (2pt) node[anchor = north west]{3};

  \node[draw=none, fill=none] at (6,-2) {$n=4$};
  \filldraw (5,1.5) circle (2pt) node[anchor = south east]{6};
  \draw[thick,line width = 3] (5,1.5) -- (7,1.5);
  \draw[thick,-] (5,1.5) -- (5,-1.5);
  \draw[thick,-] (5,1.5) -- (6,.7);
  \filldraw (7,1.5) circle (2pt) node[anchor = south west]{2};
  \draw[thick,-] (7,1.5) -- (6,.7);
  \draw[thick,-] (7,1.5) -- (7,-1.5);
  \filldraw (6,.7) circle (2pt) node[anchor = south]{1};
  \draw[dashed,-] (6,.7) -- (5.8,0);
  \draw[dashed,-] (6,.7) -- (6.2,0);
  \draw[dashed,-] (6,.7) -- (6,-.5);
  \filldraw (5.8,0) circle (2pt) node[anchor = south east]{5};
  \draw[thick, line width = 3] (5.8,0) -- (6.2,0);
  \draw[dashed,-] (5.8,0) -- (5,-1.5);
  \filldraw (6.2,0) circle (2pt) node[anchor = south west]{9};
  \draw [dashed,-] (6.2,0) -- (7,-1.5);
  \filldraw(6,-.5) circle (2pt) node[anchor = west]{8};
  \draw[thick, line width = 3] (6,-.5) -- (6,-1);
  \filldraw(6,-1) circle (2pt) node[anchor = north]{4};
  \draw[thick,-] (6.03,-1.03) -- (5.03,-1.53);
  \draw[thick,-] (5.97,-.97) -- (4.97,-1.47);
  \draw[thick,-] (6.03,-.97) -- (7.03,-1.47);
  \draw[thick,-] (5.95,-1.03) -- (6.97,-1.53);
  \filldraw (5,-1.5) circle (2pt) node[anchor = north east]{7};
  \draw[thick,-] (5,-1.5) -- (7,-1.5);
  \filldraw (7,-1.5) circle (2pt) node[anchor = north west]{3};

\end{tikzpicture}
\end{center}
\caption{$p=11$, reflective with $n=2, 3, \text{ and }4$}
\end{figure}

\begin{figure}[H]
\begin{center}
\begin{tikzpicture}[scale=.7]

  \filldraw (4,1.5) circle (2pt) node[anchor = south east]{7};
  \draw[thick,line width = 3] (4,1.5) -- (7,1.5);
  \draw[thick,-] (4,1.5) -- (4,-2);
  \draw[thick,-] (4,1.5) -- (5.5,.7);
  \filldraw (7,1.5) circle (2pt) node[anchor = south west]{2};
  \draw[thick,-] (7,1.5) -- (5.5,.7);
  \draw[thick,-] (7,1.5) -- (7,-2);
  \filldraw (5.5,.7) circle (2pt) node[anchor = south]{1};
  \draw[dashed,-] (5.5,.7) -- (5,-.5);
  \draw[dashed,-] (5.5,.7) -- (6,-.5);
  \draw[thick,line width = 3] (5.5,.7) -- (5.5,0);
  \filldraw(5.5,0) circle (2pt) node[anchor = south west]{10};
  \draw[thick, -] (5.47,0) -- (5.47,-.8);
  \draw[thick, -] (5.53,0) -- (5.53,-.8);
  \filldraw (5,-.5) circle (2pt) node[anchor = east]{6};
  \draw[thick, line width = 3] (5,-.5) -- (6,-.5);
  \draw[dashed,-] (5,-.5) -- (4,-2);
  \filldraw (6,-.5) circle (2pt) node[anchor = west]{9};
  \draw [dashed,-] (6,-.5) -- (7,-2);
  \filldraw(5.5,-.8) circle (2pt) node[anchor = west]{5};
  \draw[thick,-] (5.47,-.8) -- (5.47,-1.5);
  \draw[thick,-] (5.53,-.8) -- (5.53,-1.5);
  \filldraw (5.5,-1.5) circle (2pt) node[anchor = north]{4};
  \draw[thick,-] (5.5,-1.5) -- (4,-2);
  \draw[thick,-] (5.5,-1.5) -- (7,-2);
  \filldraw (4,-2) circle (2pt) node[anchor = north east]{8};
  \draw[thick,-] (4,-2) -- (7,-2);
  \filldraw (7,-2) circle (2pt) node[anchor = north west]{3};
\end{tikzpicture}
\end{center}
\caption{The first non-reflective case with $p=11$ is $n = 4$}
\begin{tabular}{ll}
Affine subdiagram: & $\widetilde{A}_1\widetilde{A}_2$\\ 
Norm 0 vector: & $v_0+2v_1+2v_2+v_3+v_4+v_5$\\
Complement: & $4v_0+11v_1+11v_2$\\
Complement norm: & $66$\\
Order 6 glue: & $v_1-2v_2$\\
\end{tabular}
\end{figure}

\pagebreak

\subsection{Tables and diagrams for $p=13$}

\begin{table}[H]
\begin{tabular}{clccc}
\hline
$\frac{k_0^2}{(e_i,e_i)}$ & $e_i$ & $(e_i,e_i)$ & $i$ & $n$\\
\hline\hline
$\frac{1}{1}$ & $v_0+3v_1+2v_2+v_3$ & $1$ & $4$ & $3$\\\\
$\frac{25}{13}$ & $5v_0+13v_1+13v_2$ & $13$ & $3$ & $2$\\\\
$\frac{4}{1}$ & $2v_0+7v_1+2v_2$ & $1$ & $4$ & $2$\\\\
$\frac{64}{13}$ & $8v_0+26v_1+13v_2$ & $13$ & $5$ & $2$\\\\
$\frac{324}{13}$ & $18v_0+65v_1$ & $13$ & $6$ & $2$\\\\
$\frac{144}{2}$  & $12v_0+43v_1+5v_2$ & $2$ & $7$ & $2$\\\\
$\frac{2209}{13}$ & $47v_0+169v_1+13v_2$ & $13$ & $8$ & $2$\\
\hline
\end{tabular}
\caption{Vectors found with Vinberg's Algorithm for $p = 13$}

\begin{tabular}{l}
Norm/angle sequence for $n=2$ is  $\left(2_41_213_{\infty}13_2\right)^2$.\\
Polygon has an order 2 rotation preserving \eqref{form}.
\end{tabular}
\label{table13}
\end{table}

\begin{figure}[H]
\begin{center}
\begin{tikzpicture}[scale=.8]
    \filldraw (0,2.5) circle (2pt) node[anchor = south]{1};
    \draw[thick,-] (0,2.5) -- (0,1);
    \filldraw (0,1) circle (2pt) node[anchor = south east]{2};
    \draw[thick,-] (-.03,1.03) -- (-.896,-.47);
    \draw[thick,-] (.03,.97) -- (-.836,-.53);
    \draw[thick,-] (-.03,.97) -- (.836,-.53);
    \draw[thick,-] (.03,1.03) -- (.896,-.47);
    \filldraw (-.866,-.5) circle (2pt) node[anchor = north east]{3};
    \draw[thick,line width = 3] (-.866,-.5) -- (.866,-.5);
    \filldraw (.866,-.5) circle (2pt) node[anchor = north west]{4};
\end{tikzpicture}	
\end{center}

\caption{The first non-reflective case with $p = 13$ is $n = 3$}
\begin{tabular}{ll}
Affine subdiagram: & $\widetilde{A}_1$\\ 
Norm 0 vector: & $v_0+3v_1+2v_2$\\
Complement: & $3v_0+13v_1$\\
Complement norm: & $52$\\
Order 2 glue: & $2v_1-3v_2$\\
\end{tabular}
\end{figure}

\pagebreak

\subsection{Tables and diagrams for $p=17$}

\begin{table}[H]
\begin{tabular}{clccc}
\hline
$\frac{k_0^2}{(e_i,e_i)}$ & $e_i$ & $(e_i,e_i)$ & $i$ & $n$\\
\hline\hline
$\frac{1}{2}$ & $v_0+3v_1+3v_2+v_3$ & $2$ & $n+2$ & $\geq 3$\\
& $v_0+4v_1+v_2+v_3+v_4$ & $2$ & $n+6$ & $\geq 4$\\\\
$\frac{16}{17}$ & $4v_0+17v_1$ & $17$ & $n+1$ & $\geq 2$\\\\
$\frac{1}{1}$ & $v_0+3v_1+3v_2$ & $1$ & $n+2$ & $2$\\
& $v_0+4v_1+v_2+v_3$ & $1$ & $n+6$ & $3$\\\\
$\frac{49}{34}$ & $7v_0+17v_1+17v_2+17v_3$ & $34$ & $n+7$ & $\geq 3$\\\\
$\frac{100}{34}$ & $10v_0+34v_1+17v_2+17v_3$ & $34$ & $n+8$ & $\geq 3$\\\\
$\frac{16}{2}$ & $4v_0+15v_1+7v_2$ & $2$ & $n+3$ & $\geq 2$\\\\
$\frac{169}{17}$ & $13v_0+51v_1+17v_2$ & $17$ & $n+4$ & $\geq 2$ \\\\
$\frac{576}{34}$ & $24v_0+85v_1+51v_2$ & $34$ & $n+5$ & $\geq 2$\\\\
$\frac{1224}{34}$ & $6v_0+22v_1+11v_2+3v_3$ & $34$ & $n+9$ & $\geq 3$\\\\

$\frac{3721}{34}$ & $61v_0+221v_1+119v_2+17v_3$ & $34$ & $n+10$ & $\geq 3$\\
\hline
\end{tabular}
\caption{Vectors found with Vinberg's Algorithm for $p = 17$}
\label{table17}
\end{table}

\begin{figure}[H]
\begin{center}
\begin{tikzpicture}[scale=.9]
	\node[draw=none, fill=none] at (0,-2.5) {$n=2$};
    \filldraw (0,2) circle (2pt) node[anchor = south]{1};
    \draw[thick,-] (.03,2.03) -- (1.594,1.277);
    \draw[thick,-] (-.03,1.97) -- (1.534,1.217);
    \filldraw (1.564,1.247) circle (2pt) node[anchor = south west]{2};
    \filldraw (1.95,-.445) circle (2pt) node[anchor = north west]{3};
    \draw[dashed,-] (1.95,-.445) -- (0,2);
    \draw[thick,line width = 3] (1.95,-.445) -- (.868,-1.802);
    \filldraw (.868,-1.802) circle (2pt) node[anchor = north west]{6};
    \draw[dashed,-] (.868,-1.802) -- (0,2);
    \draw[dashed,-] (.868,-1.802) -- (1.564,1.247);
    \filldraw (-.868,-1.802) circle (2pt) node[anchor = north east]{5};
    \draw[dashed,-] (-.868,-1.802) -- (0,2);
    \draw[dashed,-] (-.868,-1.802) -- (1.564,1.247);
    \draw[dashed,-] (-.868,-1.802) -- (1.95,-.445);
    \filldraw (-1.95,-.445) circle (2pt) node[anchor = north east]{7};
    \draw[dashed,-] (-1.95,-.445) -- (0,2);
    \draw[dashed,-] (-1.95,-.455) -- (1.564,1.247);
    \draw[dashed,-] (-1.95,-.455) -- (1.95,-.455);
    \draw[dashed,-] (-1.95,-.455) -- (.868,-1.802);
    \filldraw (-1.564,1.247) circle (2pt) node[anchor = south east]{4};
    \draw[dashed,-] (-1.564,1.247) -- (1.564,1.247);
    \draw[dashed,-] (-1.564,1.247) -- (1.95,-.455);
    \draw[dashed,-] (-1.564,1.247) -- (.868,-1.802);
    \draw[dashed,-] (-1.564,1.247) -- (-.868,-1.802);

\end{tikzpicture}

\begin{tikzpicture}[scale=1.2]
	\node[draw=none, fill=none] at (0,-2.8) {$n=3$};

    \filldraw (0,-2) circle (2pt) node[anchor = north]{4};
    \draw[thick,-] (0.03,-2.03) -- (.959,-1.801);
    \draw[thick,-] (-0.03,-1.97) -- (.899,-1.741);
    \draw[thick,-] (0.03,-2.03) -- (-.959,-1.801);
    \draw[thick,-] (-0.03,-1.97) -- (-.899,-1.741);
    \draw[thick,line width = 3] (0,-2) -- (1.646,-1.136);
    \draw[thick,line width = 3] (0,-2) -- (-1.646,-1.136);
    \draw[dashed,-] (0,-2) -- (.479,1.942);
    \draw[dashed,-] (0,-2) -- (-.479,1.942);
    
    \filldraw (.929,-1.771) circle (2pt) node[anchor = north west]{3};
    \draw[thick,-] (.959,-1.801) -- (1.676,-1.166);
    \draw[thick,-] (.899,-1.741) -- (1.616,-1.106);
    \draw[thick,line width=3] (.929,-1.771) -- (-.929,-1.771);
    \draw[dashed,-] (.929,-1.771) -- (-1.985,-.241);
    \draw[dashed,-] (.929,-1.771) -- (-1.326,1.497);
    \draw[dashed,-] (.929,-1.771) -- (1.326,1.497);
    \draw[dashed,-] (.929,-1.771) -- (1.87,.709);
    
    \filldraw (-.929,-1.771) circle (2pt) node[anchor = north east]{9};
    \draw[thick,-] (-.959,-1.801) -- (-1.676,-1.166);
    \draw[thick,-] (-.899,-1.741) -- (-1.616,-1.106);
    \draw[dashed,-] (-.929,-1.771) -- (1.985,-.241);
    \draw[dashed,-] (-.929,-1.771) -- (1.326,1.497);
    \draw[dashed,-] (-.929,-1.771) -- (-1.326,1.497);
    \draw[dashed,-] (-.929,-1.771) -- (-1.87,.709);
    
    \filldraw (1.646,-1.136) circle (2pt) node[anchor = north west ]{2};
    \draw[thick,-] (1.646,-1.136) -- (1.985,-.241);
    \draw[dashed,-] (1.646,-1.136) -- (-1.646,-1.136);
    \draw[dashed,-] (1.646,-1.136) -- (-1.985,-.241);
    \draw[dashed,-] (1.646,-1.136) -- (-1.87,.709);
    \draw[dashed,-] (1.646,-1.136) -- (-1.326,1.479);
    \draw[dashed,-] (1.646,-1.136) -- (.479,1.942);
    
    \filldraw (-1.646,-1.136) circle (2pt) node[anchor = north east]{6};
    \draw[thick,-] (-1.646,-1.136) -- (-1.985,-.241);
    \draw[dashed,-] (-1.646,-1.136) -- (1.985,-.241);
    \draw[dashed,-] (-1.646,-1.136) -- (1.87,.709);
    \draw[dashed,-] (-1.646,-1.136) -- (1.326,1.479);
    \draw[dashed,-] (-1.646,-1.136) -- (-.479,1.942);
    
    \filldraw(1.985,-.241) circle (2pt) node[anchor = west] {1};
    \draw[dashed,-] (1.985,-.241) -- (-1.985,-.241);
    \draw[dashed,-] (1.985,-.241) -- (-1.87,.709);
    \draw[dashed,-] (1.985,-.241) -- (-1.326,1.497);
    \draw[dashed,-] (1.985,-.241) -- (-.479,1.942);
    \draw[dashed,-] (1.985,-.241) -- (.479,1.942);
    \draw[dashed,-] (1.985,-.241) -- (1.87,.709);
    
    \filldraw (-1.985,-.241) circle (2pt) node[anchor = east]{12};
    \draw[dashed,-] (-1.985,-.241) -- (1.87,.709);
    \draw[dashed,-] (-1.985,-.241) -- (1.326,1.497);
    \draw[dashed,-] (-1.985,-.241) -- (.479,1.942);
    \draw[dashed,-] (-1.985,-.241) -- (-.479,1.942);
    \draw[dashed,-] (-1.985,-.241) -- (-1.87,.709);
    
    \filldraw (1.87,.709) circle (2pt) node[anchor = west]{11};
    \draw[thick,line width = 3] (1.87,.709) -- (1.326,1.497);
    \draw[dashed,-] (1.87,.709) -- (-1.87,.709);
    \draw[dashed,-] (1.87,.709)-- (-1.326,1.497);
    \draw[dashed,-] (1.87,.709)-- (-.479,1.942);
    \draw[dashed,-] (1.87,.709)-- (.479,1.942);
    
    \filldraw (-1.87,.709) circle (2pt) node[anchor = east]{8};
    \draw[thick,line width = 3] (-1.87,.709) -- (-1.326,1.497);
    \draw[dashed,-] (-1.87,.709)-- (1.326,1.497);
    \draw[dashed,-] (-1.87,.709)-- (.479,1.942);
    \draw[dashed,-] (-1.87,.709)-- (-.479,1.942);
    
    \filldraw (1.326,1.497) circle (2pt) node[anchor = south west]{10};
    \draw[dashed,-] (1.326,1.497) -- (-1.326,1.497);
    \draw[dashed,-] (1.326,1.497) -- (-.479,1.942);
    \draw[dashed,-] (1.326,1.497) -- (.479,1.942);
    
    \filldraw (-1.326,1.497) circle (2pt) node[anchor = south east]{13};
    \draw[dashed,-] (-1.326,1.497) -- (-.479,1.942);
    \draw[dashed,-] (-1.326,1.497) -- (.479,1.942);
    
    \filldraw (.479,1.942) circle (2pt) node[anchor = south west]{7};
    \draw[thick,line width = 3] (.479,1.942) -- (-.479,1.942);
    
    \filldraw (-.479,1.942) circle (2pt) node[anchor = south east]{5};

\end{tikzpicture}
\end{center}
\caption{$p=17$, reflective with $n=2 \text{ and }3$}
\end{figure}

\begin{figure}[H]
\begin{tikzpicture}[scale=1.2]
    
    \filldraw (-2,1) circle (2pt) node[anchor = south]{16};
    \draw[thick,line width = 3] (-2,1) -- (-1,1);
    \filldraw (-1,1) circle (2pt) node[anchor = south]{8};
    \draw[thick,line width = 3] (-1,1) -- (0,1);
    \filldraw (0,1) circle (2pt) node[anchor = south]{4};
    \draw[thick,line width = 3] (0,1) -- (1,1);
    \filldraw(1,1) circle (2pt) node[anchor = south]{20};
    
    \filldraw (-1.5,0) circle (2pt) node[anchor = east]{1};
    \draw[thick,-] (-1.47,0.03) -- (-1.97,1.03);
    \draw[thick,-] (-1.53,-.03) -- (-2.03,.97);
    \draw[thick,-] (-1.47,-0.03) -- (-.97,.97);
    \draw[thick,-] (-1.53,.03) -- (-1.03,1.03);
    \draw[thick,-] (-1.5,0) -- (-2,-1);
    \draw[thick,-] (-1.5,0) -- (-1,-1);
    \draw[dashed,-] (-1.5,0) -- (-1,-2);
    \draw[dashed,-] (-1.5,0) -- (0,-2);
    \draw[dashed,-] (-1.5,0) -- (1,-2);
    \draw[dashed,-] (-1.5,0) -- (1,-1);
    \draw[dashed,-] (-1.5,0) to[out = 20, in = 160] (.5,0);
    \filldraw (-.5,0) circle (2pt) node[anchor = east]{3};
    \draw[thick,-] (-.47,0.03) -- (-.97,1.03);
    \draw[thick,-] (-.53,-.03) -- (-1.03,.97);
    \draw[thick,-] (-.47,-0.03) -- (.03,.97);
    \draw[thick,-] (-.53,.03) -- (-.03,1.03);
    \draw[thick,-] (-.5,0) -- (-1,-1);
    \draw[thick,-] (-.5,0) -- (0,-1);
    \filldraw (.5,0) circle (2pt) node[anchor = east]{6};
    \draw[thick,-] (.53,0.03) -- (.03,1.03);
    \draw[thick,-] (.47,-.03) -- (-.03,.97);
    \draw[thick,-] (.53,-0.03) -- (1.03,.97);
    \draw[thick,-] (.47,.03) -- (.97,1.03);
    \draw[thick,-] (.5,0) -- (0,-1);
    \draw[thick,-] (.5,0) -- (1,-1);
    
    \filldraw (-2,-1) circle (2pt) node[anchor = north]{15};
    \draw[thick,line width = 3] (-2,-1) -- (-1,-1);
    \filldraw (-1,-1) circle (2pt) node[anchor = north]{2};
    \draw[thick,line width = 3] (-1,-1) -- (0,-1);
    \filldraw (0,-1) circle (2pt) node[anchor = north]{5};
    \draw[thick,line width = 3] (0,-1) -- (1,-1);
    \filldraw(1,-1) circle (2pt) node[anchor = north]{11};

    \filldraw (-2,-2) circle (2pt) node[anchor = north east]{9};
    \draw[thick,line width = 3] (-2,-2) -- (-1,-2);
    \filldraw (-1,-2) circle (2pt) node[anchor = north west]{10};
    \filldraw (0,-2) circle (2pt) node[anchor = north east]{7};
    \draw[thick,line width = 3] (0,-2) -- (1,-2);
    \filldraw(1,-2) circle (2pt) node[anchor = north west]{13};

\draw [thick,->] (-1.5,-2.8) -- (.5,-2.8); 
	\node[draw=none, fill=none] at (-.5,-3) {$T$};

\end{tikzpicture}	
\caption{The first non-reflective case with $p = 17$ is $n = 4$.  $T$ is the transformation that shifts everything to the right.  So $1\mapsto 3$ and everything else shifts accordingly.  In order to keep the picture clean, not all dotted line edges are drawn. The transformation $T$ is given by a $5\times 5$ infinite order integer matrix.}
\begin{tabular}{ll}
\end{tabular}
\[T = \left(\begin{array}{ccccc}
-25 & -25 & -22 & -18 & 187\\
-14 & -14 & -11 & -10 & 102\\
-2 & -3 & -2 & -1 & 17\\
-3 & -2 & -2 & -1 & 17\\
-7 & -7 & -6 & -5 & 52
\end{array}\right)\]
\label{17nonref}
\end{figure}

\pagebreak

\subsection{Tables and diagrams for $p=19$}
\begin{table}[H]
\begin{tabular}{clccc}
\hline
$\frac{k_0^2}{(e_i,e_i)}$ & $e_i$ & $(e_i,e_i)$ & $i$ & $n$\\
\hline\hline
$\frac{1}{2}$ & $v_0+4v_1+2v_2+v_3$ & $2$ & $4$ & $3$\\\\
$\frac{36}{38}$ & $6v_0+19v_1+19v_2$ & $38$ & $3$ & $2$\\\\
$\frac{1}{1}$ & $v_0+4v_1+2v_2$ & $1$ & $4$ & $2$\\\\
$\frac{169}{38}$ & $13v_0+57v_1$ & $38$ & $5$ & $2$\\\\
$\frac{9}{2}$ & $3v_0+13v_1+2v_2$ & $2$ & $6$ & $2$\\
\hline
\end{tabular}

\caption{Vectors found with Vinberg's Algorithm for $p = 19$}
\begin{tabular}{l}
Norm/angle sequence for $n=2$ is $\left(2_41_238_2\right)^2$.\\
Polygon has an order 2 rotation preserving \eqref{form}.
\end{tabular}
\label{table19}
\end{table}

\begin{figure}[H]
\begin{center}
\begin{tikzpicture}[scale=1]

    \filldraw (0,2.5) circle (2pt) node[anchor = south]{3};
    \draw[thick,-] (-.05,2.5) -- (-.05,1);
    \draw[thick,-] (.05,2.5) -- (.05,1);
    \filldraw (0,1) circle (2pt) node[anchor = south east]{2};
    \draw[thick,-] (0,1) -- (-.866,-.5);
    \draw[thick,-] (0,1) -- (.866,-.5);
    \filldraw (-.866,-.5) circle (2pt) node[anchor = north east]{1};
    \draw[thick,line width = 3] (-.866,-.5) -- (.866,-.5);
    \filldraw (.866,-.5) circle (2pt) node[anchor = north west]{4};
\end{tikzpicture}	
\end{center}

\caption{The first non-reflective case with $p = 19$ is $n = 3$}
\begin{tabular}{ll}
Affine subdiagram: & $\widetilde{A}_1$\\ 
Norm 0 vector: & $v_0+3v_1+3v_2+v_3$\\
Complement: & $3v_0+19v_1$\\
Complement norm: & $190$\\
Glue: & none\\
\end{tabular}
\end{figure}

\pagebreak

\subsection{Tables and diagrams for $p=23$}
\label{app23}
\begin{table}[ht]

\begin{tabular}{clccc}
\hline
$\frac{k_0^2}{(e_i,e_i)}$ & $e_i$ & $(e_i,e_i)$ & $i$  & $n$ \\
\hline\hline
$\frac{1}{2}$ & $v_0+4v_1+3v_2$ & $2$ & $n+1$ & $2,3$ \\
 & $v_0+5v_1$ & $2$ & $n+2$ & $2,3$\\\\
$\frac{1}{1}$ & $ v_0+4v_1+2v_2+2v_3$ & $1$ & $6$ & $3$\\\\
$\frac{4}{2}$ & $ 2v_0+7v_1+6v_2+3v_3$ & $2$ & $7$ & $3$\\
 & $ 2v_0+9v_1+3v_2+2v_3$ & $2$ & $8$ & $3$ \\\\
$\frac{9}{2}$ & $ 3v_0+9v_1+8v_2+8v_3$ & $2$ & $9$ & $3$\\\\
$\frac{16}{1}$ & $ 4v_0+12v_1+12v_2+9v_3$ & $1$ & $10$ & $3$\\\\
$\frac{36}{2}$ & $ 6v_0+27v_1+10v_2+v_3$ & $2$ & $11$ & $3$ \\\\
$\frac{36}{1}$ & $6v_0+27v_1+10v_2$ & $1$ & $ 5$ & $2$\\\\
$\frac{49}{1}$ & $ 7v_0+32v_1+10v_2+2v_3$ & $1$ & $12$ & $3$\\\\
$\frac{100}{2}$ & $ 10v_0+33v_1+27v_2+22v_3$ & $2$ & $13$ & $3$\\\\
$\frac{144}{2}$ & $ 12v_0+55v_1+17v_2$ & $2$ & $6$ & $2$ \\ 
&&& $14$ & $3$\\\\
$\frac{225}{2}$ & $ 15v_0+48v_1+43v_2+32v_3$ & $2$ & $15$ & $3$\\\\
$\frac{400}{2}$ & $ 20v_0+92v_1+27v_2+3v_3$ & $2$ & $16$ & $3$\\\\
\hline
\end{tabular}
\caption{Vectors found with Vinberg's algorithm for $p = 23$.}

\begin{tabular}{ll}
Norm/angle sequence for $n=2$ is  $\left(2_41_24_3\right)^2$.\\
Polygon has an order 2 rotation preserving \eqref{form}. 
\end{tabular}
\end{table}

\begin{figure}[H]
\begin{center}
\begin{tikzpicture}[scale=.9]

   	\node[draw=none,fill=none,align=center] at (-4.5,.866) {19 \\ nm 1};
    \draw[thick,-] (-3.5,.866) -- (-4,0);
    \draw[thick,-] (-4,0) -- (-5,0);
    \draw[thick,-] (-5,0) -- (-5.5,.866);
    
   	\node[draw=none,fill=none,align=center] at (-3,-1.732) {21 \\ nm 2};
    \draw[thick,-] (-3.5,-.866) -- (-2.5,-.866);
    \draw[thick,-] (-2.5,-.866) -- (-2,-1.732);
    \draw[thick,-] (-2,-1.732) -- (-2.5,-.866);
    \draw[thick,-] (-2.5,-2.598) -- (-3.5,-2.598);
    \draw[thick,-] (-3.5,-2.598) -- (-4,-1.732);
    \draw[thick,-] (-4,-1.732) -- (-3.5,-.866);

   	\node[draw=none,fill=none,align=center] at (-3,0) {17 \\ nm 2};
    \draw[thick,-] (-3.5,.866) -- (-2.5,.866);
    \draw[thick,-] (-2.5,.866) -- (-2,0);
    \draw[thick,-] (-2,0) -- (-2.5,-.866);
    \draw[thick,-] (-2.5,-.866) -- (-3.5,-.866);
    \draw[thick,-] (-3.5,-.866) -- (-4,0);
    \draw[thick,-] (-4,0) -- (-3.5,.866);

   	\node[draw=none,fill=none,align=center] at (-3,1.732) {13 \\ nm 2};
    \draw[thick,-] (-2,1.732) -- (-2.5,.866);
    \draw[thick,-] (-2.5,.866) -- (-3.5,.866);
    \draw[thick,-] (-3.5,.866) -- (-4,1.732);

    \filldraw (-1,-3.464) circle (2pt);
   	\node[draw=none,fill=none,align=center] at (-1.5,-4.33) {19 \\ nm 1};
    \draw[thick,-] (-2,-3.464) -- (-1,-3.464);
    \draw[thick,-] (-1,-3.464) -- (-.5,-4.33);
    \draw[thick,-] (-2.5,-4.33) -- (-2,-3.464);
    
   	\node[draw=none,fill=none,align=center] at (-1.5,-2.598) {15 \\ nm 2};
    \draw[thick,-] (-2,-1.732) -- (-1,-1.732);
    \draw[thick,-] (-1,-1.732) -- (-.5,-2.598);
    \draw[thick,-] (-.5,-2.598) -- (-1,-3.464);
    \draw[thick,-] (-1,-3.464) -- (-2,-3.464);
    \draw[thick,-] (-2,-3.464) -- (-2.5,-2.598);
    \draw[thick,-] (-2.5,-2.596) -- (-2,-1.732);
    
   	\node[draw=none,fill=none,align=center] at (-1.5,-.866) {10 \\ nm 1};
    \draw[thick,-] (-2,0) -- (-1,0);
    \draw[thick,-] (-1,0) -- (-.5,-.866);
    \draw[thick,-] (-.5,-.866) -- (-1,0);
    \draw[thick,-] (-1,-1.732) -- (-2,-1.732);
    \draw[thick,-] (-2,-1.732) -- (-2.5,-.866);
    \draw[thick,-] (-2.5,-.866) -- (-2,0);

   	\node[draw=none,fill=none,align=center] at (-1.5,.866) {9 \\ nm 2};
    \draw[thick,-] (-2,1.732) -- (-1,1.732);
    \draw[thick,-] (-1,1.732) -- (-.5,.866);
    \draw[thick,-] (-.5,.866) -- (-1,0);
    \draw[thick,-] (-1,0) -- (-2,0);
    \draw[thick,-] (-2,0) -- (-2.5,.866);
    \draw[thick,-] (-2.5,.866) -- (-2,1.732);

   	\node[draw=none,fill=none,align=center] at (-1.5,2.598) {6 \\ nm 1};
    \draw[thick,-] (-.5,2.598) -- (-1,1.732);
    \draw[thick,-] (-1,1.732) -- (-2,1.732);
    \draw[thick,-] (-2,1.732) -- (-2.5,2.598);

   	\node[draw=none,fill=none,align=center] at (0,-3.464) {13 \\ nm 2};
    \draw[thick,-] (-.5,-2.598) -- (.5,-2.598);
    \draw[thick,-] (.5,-2.598) -- (1,-3.464);
    \draw[thick,-] (-1,-3.464) -- (-.5,-2.598);
    
   	\node[draw=none,fill=none,align=center] at (0,-1.732) {7 \\ nm 2};
    \draw[thick,-] (-.5,-.866) -- (.5,-.866);
    \draw[thick,-] (.5,-.866) -- (1,-1.732);
    \draw[thick,-] (1,-1.732) -- (.5,-2.598);
    \draw[thick,-] (.5,-2.598) -- (-.5,-2.598);
    \draw[thick,-] (-.5,-2.598) -- (-1,-1.732);
    \draw[thick,-] (-1,-1.732) -- (-.5,-.866);
    
    \filldraw (-.5,-.866) circle (2pt);
\draw(-.5,-.866) circle (4pt);
\draw(.5,.866) circle (4pt);
    \node[draw=none,fill=none,align=center] at (0,0) {1 \\ nm 2};
    \draw[thick,-] (-.5,.866) -- (.5,.866);
    \draw[thick,-] (.5,.866) -- (1,0);
    \draw[thick,-] (1,0) -- (.5,-.866);
    \draw[thick,-] (.5,-.866) -- (-.5,-.866);
    \draw[thick,-] (-.5,-.866) -- (-1,0);
    \draw[thick,-] (-1,0) -- (-.5,.866);

   	\node[draw=none,fill=none,align=center] at (0,1.732) {2 \\ nm 2};
    \draw[thick,-] (-.5,2.598) -- (.5,2.598);
    \draw[thick,-] (.5,2.598) -- (1,1.732);
    \draw[thick,-] (1,1.732) -- (.5,.866);
    \draw[thick,-] (.5,.866) -- (-.5,.866);
    \draw[thick,-] (-.5,.866) -- (-1,1.732);
    \draw[thick,-] (-1,1.732) -- (-.5,2.598);

   	\node[draw=none,fill=none,align=center] at (0,3.464) {8 \\ nm 2};
    \draw[thick,-] (1,3.464) -- (.5,2.598);
    \draw[thick,-] (.5,2.598) -- (-.5,2.598);
    \draw[thick,-] (-.5,2.598) -- (-1,3.464);

    \filldraw (2,-1.732) circle (2pt);

   	\node[draw=none,fill=none,align=left] at (1.5,-2.598) {6 \\ nm 1};
    \draw[thick,-] (1,-1.732) -- (2,-1.732);
    \draw[thick,-] (2,-1.732) -- (2.5,-2.598);
    \draw[thick,-] (.5,-2.598) -- (1,-1.732);

   	\node[draw=none,fill=none,align=left] at (1.5,-.866) {4 \\ nm 2};    
    \draw[thick,-] (1,0) -- (2,0);
    \draw[thick,-] (2,0) -- (2.5,-.866);
    \draw[thick,-] (2.5,-.866) -- (2,-1.732);
    \draw[thick,-] (2,-1.732) -- (1,-1.732);
    \draw[thick,-] (1,-1.732) -- (.5,-.866);
    \draw[thick,-] (.5,-.866) -- (1,0);
    
    \filldraw (2.5,.866) circle (2pt);
   	\node[draw=none,fill=none,align=left] at (1.5,.866) {3 \\ nm 1};
    \draw[thick,-] (1,1.732) -- (2,1.732);
    \draw[thick,-] (2,1.732) -- (2.5,.866);
    \draw[thick,-] (2.5,.866) -- (2,0);
    \draw[thick,-] (2,0) -- (1,0);
    \draw[thick,-] (1,0) -- (.5,.866);
    \draw[thick,-] (.5,.866) -- (1,1.732);

   	\node[draw=none,fill=none,align=left] at (1.5,2.598) {5 \\ nm 2};
    \draw[thick,-] (1,3.464) -- (2,3.464);
    \draw[thick,-] (2,3.464) -- (2.5,2.598);
    \draw[thick,-] (2.5,2.598) -- (2,1.732);
    \draw[thick,-] (2,1.732) -- (1,1.732);
    \draw[thick,-] (1,1.732) -- (.5,2.598);
    \draw[thick,-] (.5,2.598) -- (1,3.464);

   	\node[draw=none,fill=none,align=left] at (1.5,4.33) {12 \\ nm 1};
    \draw[thick,-] (2.5,4.33) -- (2,3.464);
    \draw[thick,-] (2,3.464) -- (1,3.464);
    \draw[thick,-] (1,3.464) -- (.5,4.33);

   	\node[draw=none,fill=none,align=left] at (3,-1.732) {8 \\ nm 2};
    \draw[thick,-] (2.5,-.866) -- (3.5,-.866);
    \draw[thick,-] (3.5,-.866) -- (4,-1.732);
    \draw[thick,-] (2,-1.732) -- (2.5,-.866);
    
   	\node[draw=none,fill=none,align=left] at (3,0) {11 \\ nm 2};
    \draw[thick,-] (2.5,.866) -- (3.5,.866);
    \draw[thick,-] (3.5,.866) -- (4,0);
    \draw[thick,-] (4,0) -- (3.5,-.866);
    \draw[thick,-] (3.5,-.866) -- (2.5,-.866);
    \draw[thick,-] (2.5,-.866) -- (2,0);
    \draw[thick,-] (2,0) -- (2.5,.866);
    
   	\node[draw=none,fill=none,align=left] at (3,1.732) {14 \\ nm 2};
    \draw[thick,-] (2.5,2.598) -- (3.5,2.598);
    \draw[thick,-] (3.5,2.598) -- (4,1.732);
    \draw[thick,-] (4,1.732) -- (3.5,.866);
    \draw[thick,-] (3.5,.866) -- (2.5,.866);
    \draw[thick,-] (2.5,.866) -- (2,1.732);
    \draw[thick,-] (2,1.732) -- (2.5,2.598);

   	\node[draw=none,fill=none,align=left] at (3,3.464) {16 \\ nm 2};
    \draw[thick,-] (2.5,4.33) -- (3.5,4.33);
    \draw[thick,-] (3.5,4.33) -- (4,3.464);
    \draw[thick,-] (4,3.464) -- (3.5,2.598);
    \draw[thick,-] (3.5,2.598) -- (2.5,2.598);
    \draw[thick,-] (2.5,2.598) -- (2,3.464);
    \draw[thick,-] (2,3.464) -- (2.5,4.33);

   	\node[draw=none,fill=none,align=left] at (3,5.196) {22 \\ nm 2};
    \draw[thick,-] (4,5.196) -- (3.5,4.33);
    \draw[thick,-] (3.5,4.33) -- (2.5,4.33);
    \draw[thick,-] (2.5,4.33) -- (2,5.196);

\filldraw (5,0) circle (2pt);
   	\node[draw=none,fill=none,align=left] at (4.5,-.866) {12 \\ nm 1};
    \draw[thick,-] (4,0) -- (5,0);
    \draw[thick,-] (5,0) -- (5.5,-.866);
    \draw[thick,-] (3.5,-.866) -- (4,0);
    
   	\node[draw=none,fill=none,align=left] at (4.5,.866) {18 \\ nm 2};
    \draw[thick,-] (4,1.732) -- (5,1.732);
    \draw[thick,-] (5,1.732) -- (5.5,.866);
    \draw[thick,-] (5.5,.866) -- (5,0);
    \draw[thick,-] (5,0) -- (4,0);
    \draw[thick,-] (4,0) -- (3.5,.866);
    \draw[thick,-] (3.5,.866) -- (4,1.732);
    
   	\node[draw=none,fill=none,align=left] at (4.5,2.598) {20 \\ nm 1};
    \draw[thick,-] (4,3.464) -- (5,3.464);
    \draw[thick,-] (5,3.464) -- (5.5,2.598);
    \draw[thick,-] (5.5,2.598) -- (5,1.732);
    \draw[thick,-] (5,1.732) -- (4,1.732);
    \draw[thick,-] (4,1.732) -- (3.5,2.598);
    \draw[thick,-] (3.5,2.598) -- (4,3.464);

   	\node[draw=none,fill=none,align=left] at (6,0) {22 \\ nm 2};
    \draw[thick,-] (5.5,.866) -- (6.5,.866);
    \draw[thick,-] (6.5,.866) -- (7,0);
    \draw[thick,-] (5.5,.866) -- (6.5,.866);

\end{tikzpicture}
\end{center}
\caption{The fundamental polyhedron for $p = 23$, $n=3$.  The open faces on the top match up with the with the ones with the same labels on the bottom to make a cylinder.  The closed dots are a $G$ orbit and the open dots are an $R$ orbit.}
\label{23poly}
\end{figure}

\[G = \left(\begin{array}{cccc}
-46 & -37 & -32 & 322\\
-13 & -10 & -10 & 92\\
-4 & -2 & -2 & 23\\
-10 & -8 & -7 & 70\\
\end{array}\right)
R = \left(\begin{array}{cccc}
-18 & -19 & -12 & 138\\
-19 & -18 & -12 & 138\\
-12 & -12 & -9 & 92\\
-6 & -6 & -4 & 45
\end{array}\right)\]

\bibliographystyle{srtnumbered}

\bibliography{mybib}

\subsection*{Acknowledgment}
Thanks to Daniel Allcock for posing the question.

\end{document}